\def\endofClaim{\hfill\scalebox{.6}{$\Box$}}
\let\eps\varepsilon 
\let\rho\varrho
\def\leq{\leqslant}
\def\geq{\geqslant}
\theoremstyle{plain}
\newtheorem*{theorem*}{Theorem}
\newtheorem{theorem}{Theorem}
\newtheorem{lemma}{Lemma}
\newtheorem{claim}{Claim} 
\theoremstyle{definition}
\theoremstyle{remark} 
\newcommand{\oldqed}{}
\newenvironment{claimproof}[1][Proof]{
  \renewcommand{\oldqed}{\qedsymbol}
  \renewcommand{\qedsymbol}{\endofClaim}
  \begin{proof}[#1]
}{
  \end{proof}
  \renewcommand{\qedsymbol}{\oldqed}
}
\DeclarePairedDelimiter\abs{\lvert}{\rvert}
\DeclarePairedDelimiter\floor{\lfloor}{\rfloor}
\begin{document}

\title{Multicolour Ramsey numbers of paths and even cycles}

\author{Ewan Davies}
\author{Matthew Jenssen}
\author{Barnaby Roberts}
\address{London School of Economics and Political Science}
\email{\{e.s.davies,m.o.jenssen,b.j.roberts\}@lse.ac.uk}
\date{\today}

\begin{abstract}
We prove new upper bounds on the multicolour Ramsey numbers of paths and even cycles. It is well known that $(k-1)n+o(n)\leq R_k(P_n)\leq R_k(C_n)\leq kn+o(n)$. 
The upper bound was recently improved by S\'ark\"ozy who showed that $R_k(C_n)\leq\left(k-\frac{k}{16k^3+1}\right)n+o(n)$.  Here we show $R_k(C_n) \leq (k-\frac14)n +o(n)$, obtaining the first improvement to the coefficient of the linear term by an absolute constant. 
\end{abstract}

\maketitle
\thispagestyle{empty}

\section{Introduction}
Ramsey theory is one of the central areas of study in combinatorics and a key problem in the field is that of determining the Ramsey numbers of graphs, defined as follows. 
For a graph $G$, the \emph{Ramsey number} $R_k(G)$ is the least integer $N$ such that any colouring of the edges of the complete graph $K_N$ on $N$ vertices with $k$ colours yields a monochromatic copy of $G$. 
The existence of Ramsey numbers is guaranteed by Ramsey's classical result~\cite{Ram}, but in the case $k\geq3$, determining the value of $R_k(G)$ for a given graph $G$ is in most cases difficult. 
There are only a few graphs $G$ for which we know $R_k(G)$ exactly and often one has to settle for bounds on this quantity. 
In this paper we focus on the case where $G$ is the $n$-vertex path $P_n$, and the case where $n$ is even and $G$ is the $n$-vertex cycle $C_n$. 
The two-colour Ramsey number of a path was completely determined by Gerencs\'er and Gy\'arf\'as~\cite{GerGya} who showed that for $n\geq 2$
\[
R_2(P_n)=\left\lfloor\frac{3n-2}{2}\right\rfloor.
\]
For three colours, Faudree and Schelp \cite{FauSCh2} conjectured that 
\[
R_3(P_n)=\begin{cases}
2n-2 & \text { for } n \text{ even}\,,\\
2n-1 & \text { for } n \text{ odd}\,.
\end{cases}
\]
This conjecture was resolved for large $n$ by Gy\'arf\'as, Ruszink\'o, S\'ark\"ozy and Szemer\'edi~\cite{Gya} but for $k\geq4$ much less is known. 
A well-known upper bound $R_k(P_n) \leq kn$ follows easily by observing that any $k$-colouring of the complete graph on $kn$ vertices contains a colour class with at least $(kn-1)\frac{n}{2}$ edges by the pigeonhole principle. A result of Erd\H{o}s and Gallai~\cite{erdHos1959maximal} (Lemma~\ref{lem:EG} below) then implies that any graph on $kn$ vertices with this many edges contains a copy of $P_n$. Despite the simplicity of this observation, the bound was only recently improved upon by S\'ark\"ozy~\cite{Sarkozy} who proved a stability version of Lemma~\ref{lem:EG} and showed that for $k\geq4$ and $n$ sufficiently large, 
\[
R_k(P_n)\leq\left(k-\frac{k}{16k^3+1}\right)n\,.
\]
In this paper we improve on the above result for all $k\geq 4$ reducing the upper bound on $R_k(P_n)$ by an amount that does not deteriorate as $k$ grows.
Our method is similar to that of \cite{Sarkozy} in that we also use results of Erd\H{o}s and Gallai~\cite{erdHos1959maximal}, and Kopylov~\cite{kopylov1977maximal} to bound the number of edges in the densest two colours.
Our improvement comes from using more information about the densest colour in order to obtain stronger bounds on the number of edges in the second densest.

\begin{theorem}\label{thm:RPn}
For $k\geq 4$ and all $n\geq 64k$,
\[
R_k(P_n)\leq \left(k-\frac{1}{4}+\frac{1}{2k}\right)n\,.
\]
\end{theorem}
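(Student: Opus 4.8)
The plan is to argue by contradiction. Suppose some $k$-colouring of the edges of $K_N$, where $N=\bigl\lceil(k-\tfrac14+\tfrac1{2k})n\bigr\rceil$, has no monochromatic $P_n$, and write $G_1,\dots,G_k$ for the colour classes, indexed so that $e(G_1)\ge e(G_2)\ge\dots\ge e(G_k)$. Since the $e(G_i)$ sum to $\binom N2$ and each of the last $k-2$ is at most $e(G_2)$, we get the basic inequality
\[
\binom N2\ \le\ e(G_1)+(k-1)\,e(G_2).
\]
Lemma~\ref{lem:EG} alone gives $e(G_1),e(G_2)\le\tfrac{(n-2)N}{2}$, which only recovers $R_k(P_n)\le kn$; the whole point is to use structure forced on the densest colour $G_1$ to bring $e(G_2)$ a constant fraction of $n^2$ below $\tfrac{(n-2)N}{2}$.

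The first and main step is to analyse $G_1$. By pigeonhole $e(G_1)\ge\tfrac1k\binom N2$, which lies only $O(n^2/k)$ — in fact only about $\tfrac18 n^2$ — below the Erd\H{o}s--Gallai maximum. Using the classification of extremal $P_n$-free graphs (disjoint unions of cliques $K_{n-1}$ together with one smaller clique), and Kopylov's theorem plus the block decomposition to control the remaining dense $P_n$-free graphs, I would argue that, up to a lower-order number of edges, $G_1$ is a disjoint union of cliques of order at most $n-1$; since its $N$ vertices must be covered, there are about $k$ of them, namely $k-1$ of order close to $n-1$ and one of order close to $(\tfrac34+\tfrac1{2k})n$. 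The configurations that do not fit this picture — $G_1$ connected, $G_1$ with a large $2$-connected ``Kopylov graph'' $K_{\lfloor(n-1)/2\rfloor}\vee\overline{K_m}$ as a component, $G_1$ containing a $K_{n-2}$ with many pendant vertices, and mixtures of these — must be treated separately; note that for large $k$ the Kopylov component cannot merely be excluded, since its complement contains a large clique, so it has to be analysed on its own terms. This quantitative stability argument, valid for every $n\ge 64k$, is the real work of the proof.

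In the principal case, $\overline{G_1}$ is, up to lower-order corrections, the complete multipartite graph whose parts are the $\approx k$ clique-components of $G_1$, all of order at most $n-1$. Then $G_2\subseteq\overline{G_1}$ is $P_n$-free and essentially a subgraph of a complete multipartite graph with only about $k$ parts, so it contains no clique of order $n-1$ — that would need $n-1\gg k$ vertices pairwise non-adjacent in $G_1$. Since the Erd\H{o}s--Gallai extremal graph is exactly a disjoint union of copies of $K_{n-1}$, this structural deficiency forces $e(G_2)$ a constant fraction of $n^2$ below $\tfrac{(n-2)N}{2}$; the precise bound I would obtain by rerunning the Erd\H{o}s--Gallai and Kopylov arguments for $P_n$-free subgraphs of complete multipartite graphs, with the part sizes inherited from $G_1$ ($k-1$ parts of order $\approx n$, one of order $\approx\tfrac34 n$), and then restoring the relatively few edges of $G_2$ that can lie inside parts. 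Substituting this bound on $e(G_2)$ and the Erd\H{o}s--Gallai bound on $e(G_1)$ into the displayed inequality, the left-hand side exceeds the right as soon as $N>(k-\tfrac14+\tfrac1{2k})n$; careful bookkeeping of constants is what yields the $+\tfrac1{2k}$.

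The hardest part is the structural (stability) step for $G_1$: converting $e(G_1)\ge\tfrac1k\binom N2$ into a description of $\overline{G_1}$ sharp enough to bound $e(G_2)$, with no $o(n)$ slack anywhere, and in particular handling the dense $P_n$-free graphs that are \emph{not} near-unions of small cliques — most delicately the Kopylov graphs, whose order can be as large as $(k-\tfrac14)n$ and whose complements are far from multipartite. A secondary technical point is establishing the sharp edge bound for $P_n$-free subgraphs of complete multipartite graphs with boundedly many parts, which should again reduce to Lemma~\ref{lem:EG} and Kopylov's theorem but is sensitive to the inherited part sizes.
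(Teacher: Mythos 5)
Your high-level idea — use the multipartite structure of $\overline{G_1}$ inherited from the components of the densest colour to force the second-densest colour a constant below the Erd\H{o}s--Gallai bound — is the right one, and it is also the engine of the paper's proof. But the route you propose for getting there has a serious gap, and it is also quite different from (and substantially harder than) what the paper actually does.

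The gap is the ``stability step.'' You plan to show that $G_1$ is, ``up to a lower-order number of edges,'' a disjoint union of about $k$ cliques of order at most $n-1$, and then treat separately the configurations (most delicately, Kopylov graphs $K_{\lfloor(n-1)/2\rfloor}\vee\overline{K_m}$) that do not fit. You acknowledge this is ``the real work of the proof'' and that the Kopylov case has to be analysed on its own terms, but you do not explain how. This matters: a precise stability theorem of the strength you want is not known, would certainly require $n$ large relative to nothing explicit, and is not obviously provable at all for every $n\geq 64k$; and the Kopylov configuration for $G_1$ defeats your base inequality $\binom{N}{2}\leq e(G_1)+(k-1)e(G_2)$ when $e(G_2)$ is only bounded by Erd\H{o}s--Gallai, because then $\overline{G_1}$ contains a huge clique and is far from multipartite. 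You have flagged the right obstacle but have no mechanism to overcome it.

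The paper avoids stability entirely. It works with the colouring maximising $e(\text{blue})$, proves three crude but quantitative facts by direct application of Lemma~\ref{lem:EG} and Lemma~\ref{lem:connNoPn}: every blue component has at most $\frac{5n}{4}$ vertices (Claim~\ref{clm:noBlue5n/4}), the total excess of blue component sizes above $n$ is less than $\alpha n$ (Claim~\ref{clm:smallx}), and there are at most $\frac43(k-\alpha)+1$ blue components (Claim~\ref{clm:upperboundc}). Claim~\ref{clm:noBlue5n/4} rules out your problematic Kopylov case in three lines: if blue had a component on $\beta n>\frac54 n$ vertices then $e(B)\leq(k-\alpha-\frac14)\frac{n^2}{2}$, and since blue is densest, $k\cdot e(B)\geq\binom{N}{2}$ fails. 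No block decompositions, no structural classification. The paper also uses a sharper bookkeeping than your $\binom{N}{2}\leq e(G_1)+(k-1)e(G_2)$: it splits $e(G)$ into edges inside blue components (bounded trivially by $\sum_i\binom{v(B_i)}{2}$, which the excess bound controls) and edges between blue components (which are all non-blue and therefore dominated by $(k-1)e(R')$, where $R'$ is the densest colour restricted to the $c$-partition). This lets it apply the $c$-partite, $P_n$-free bound of Lemma~\ref{lem:partiteNoPn} only where it is actually true — between components — instead of trying to bound $e(G_2)$ globally. Finally, it exploits the freedom to choose the colouring maximising $e(\text{blue})$ (not just the pigeonhole lower bound), which is what makes Claim~\ref{clm:upperboundc} go through. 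In short: same underlying intuition, but the paper's realisation of it is purely by local edge-counting, with no stability theorem and no case analysis beyond the three component-size claims, and your proposal as written does not contain the ideas that make that possible.
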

If $n$ is much larger we can in fact slightly improve on this bound and extend it to even cycles, see Theorem~\ref{thm:RCn} below. 

Since $P_n$ is a subgraph of $C_n$ we have $R_k(P_n)\leq R_k(C_n)$. 
It is believed that for fixed $k$ and even $n$ the Ramsey numbers $R_k(P_n)$ and $R_k(C_n)$ are asymptotically equal. 
This is due to an application of the regularity lemma and the notion of connected matchings pioneered by \L uczak in~\cite{Lucz}. 
Progress on these two problems therefore track each other closely. 
In the case of two colours Faudree and Schelp~\cite{FauSch}, and independently Rosta~\cite{Rosta} showed that $R_2(C_n)=\frac{3n}{2}+1$ for even $n\geq6$. 
For three colours, Benevides and Skokan~\cite{BenSko} proved that $R_3(C_n)=2n$ for sufficiently large even $n$. For $k\geq4$ colours, again very little is known. \L uczak, Simonovits and Skokan \cite{LSS} showed that for $n$ even, $R_k(C_n)\leq kn+o(n)$, and recently
S\'ark\"ozy \cite{Sarkozy} improved this upper bound to $\left(k-\frac{k}{16k^3+1}\right)n+o(n)$.
Here we obtain a strengthening of Theorem~\ref{thm:RPn} for large $n$.

\begin{theorem}\label{thm:RCn}
For $k\geq 4$ and $n$ even
\[
R_k(C_n)\leq \left(k-\frac{1}{4}\right)n + o(n)\,.
\]
\end{theorem}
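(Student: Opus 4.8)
The plan is to deploy \L uczak's connected-matching method \cite{Lucz}, which converts the problem into a statement about colourings of almost-complete graphs, and then to run an argument parallel to the one behind Theorem~\ref{thm:RPn}. Fix a small $\delta>0$, set $N=\lceil(k-\tfrac14+\delta)n\rceil$, and apply the multicolour Szemer\'edi regularity lemma to a $k$-colouring of $K_N$. Assigning to each sufficiently regular, sufficiently dense cluster pair a majority colour produces a $k$-colouring of a graph $G$ on $m$ vertices, with $m$ depending only on the regularity parameters, in which at least $(1-o(1))\binom m2$ pairs receive a colour. The standard transference step --- a regular pair of density bounded away from $0$ in a colour contains a path of that colour through almost all of its vertices, and such paths concatenate along a connected component of the colour class --- shows that a monochromatic connected matching of $G$ covering $t$ vertices produces in $K_N$ a monochromatic cycle of every even length up to $(1-o(1))\,tN/m$. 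Since $n$ is even this yields a $C_n$ as soon as $t\ge(1+o(1))\,nm/N=(1+o(1))\,m/(k-\tfrac14)$, and it is here that parity is used, so that the cycle length may be tuned exactly to $n$. It therefore suffices to prove that for every $k\ge2$ and $\eta>0$, every $k$-colouring of an $m$-vertex graph with at least $(1-\eta)\binom m2$ edges and $m$ large enough admits a monochromatic connected matching covering at least $\bigl(\tfrac{4}{4k-1}-\eta\bigr)m$ vertices; letting $\delta\to0$ then gives Theorem~\ref{thm:RCn}.

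To establish this combinatorial statement I would mirror the proof of Theorem~\ref{thm:RPn}, systematically replacing ``long monochromatic path'' by ``large monochromatic connected matching''. The relevant input from Erd\H{o}s and Gallai \cite{erdHos1959maximal} becomes the matching form of Lemma~\ref{lem:EG}: a graph on $v$ vertices with no matching of size exceeding $t$ has at most $\max\{\binom{2t}{2},\,\binom t2+t(v-t)\}$ edges, which applied to each component bounds the number of edges in a colour class with no connected matching on more than $2t$ vertices by roughly $tm$. Assuming for contradiction that no colour has a connected matching on $(\tfrac{4}{4k-1}-\eta)m$ vertices, and ordering the classes by density $e_1\ge\cdots\ge e_k$, this bound (refined using Kopylov's theorem \cite{kopylov1977maximal} when a class is $2$-connected) forces $e_1\le(1+o(1))\tfrac{2}{4k-1}m^2$, and combined with $\sum_ie_i=\binom m2$ this leaves essentially no slack for small $\eta$. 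The genuine saving --- the reason the $\tfrac1{2k}n$ term of Theorem~\ref{thm:RPn} can be absorbed into $o(n)$ and the coefficient pushed to exactly $k-\tfrac14$ --- must come from the \emph{structure} that near-equality imposes on the densest colour, namely that its edges are almost all covered by a small vertex set, or that it is close to a disjoint union of medium cliques, and from using this to improve the bound on $e_2$; equivalently, one peels the densest colour off and applies the statement with $k-1$ to the large, almost-complete graph that remains, noting that $\tfrac{4k-3}{4k-5}>1$ so the recursion closes comfortably, with the cases $k\in\{2,3\}$ supplied by the known connected-matching results underlying \cite{Lucz,Gya,BenSko}.

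I expect the combinatorial core to be the main obstacle, precisely because it is tight: the naive edge count above already meets the target coefficient, so the argument must be a true stability statement ruling out the competing extremal colourings rather than a counting argument. The awkward case is when the densest colour is close to a disjoint union of cliques of size $\Theta(m/k)$: then what remains after peeling it off is not an almost-complete graph but a complete multipartite graph with parts of linear size, so the inductive statement (or the improved bound on $e_2$) has to be made robust against a constant-order density deficit, and the distinct extremal configurations must be treated in parallel. A secondary, more routine difficulty is the regularity bookkeeping --- verifying that concatenation along a monochromatic component realises \emph{every} sufficiently large even length, so that $C_n$ itself and not merely some long even cycle is obtained, and tracking the $\varepsilon$, $\delta$ and $o(n)$ terms so that the hypotheses of the combinatorial statement are met.
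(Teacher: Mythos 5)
You have the right outer strategy: the reduction via \L uczak's regularity method (the paper uses the Figaj--\L uczak Lemma~\ref{lem:Lucz}) to a statement about monochromatic connected matchings in almost-complete $k$-coloured graphs is exactly the route taken, and you correctly note that parity of $n$ is consumed at this stage. But your plan for the combinatorial core --- which you yourself flag as the main obstacle --- diverges from the paper's and would not close as sketched.

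The paper's Theorem~\ref{thm:connM} is not proved by a stability analysis of the densest colour, nor by recursing on $k$. It reruns the decomposition behind Theorem~\ref{thm:RPn}: take the densest colour (``blue'') with components $B_1,\dotsc,B_c$, let $R'$ be the $c$-partite graph of edges in the densest colour \emph{between} blue components, and bound
\[
e(G)\leq (k-1)\,e(R') + \sum_{i=1}^{c}\binom{v(B_i)}{2}\,.
\]
The coefficient $k-\tfrac14$ then comes entirely from Lemma~\ref{lem:partiteNoMatching}, a sharpened extremal bound for \emph{connected $c$-partite graphs with no matching of $n/2$ edges} whose parts pairwise sum to at least $n$: once $v(H)\geq\tfrac{31n}{16}$ one gets $e(H)\leq\tfrac n2\bigl(v(H)-\tfrac{7n}{16}\bigr)$, improving the $\tfrac n2\bigl(v(H)-\tfrac n4\bigr)$ that Kopylov's theorem gives for $P_n$-free connected graphs. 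This tightens the between-component bound to $e(R')\leq\bigl(k-\alpha-\tfrac{7}{16}\bigr)\tfrac{n^2}{2}$ and makes $\alpha=\tfrac14$ viable. By contrast, your count via the Erd\H os--Gallai matching theorem gives $\sum_i e_i\lesssim\tfrac{2k}{4k-1}m^2$, which \emph{exceeds} $\binom m2$ (since $\tfrac{2k}{4k-1}>\tfrac12$), so the naive bound yields no contradiction; and the recursion you propose to bridge this fails exactly where you suspect --- after removing the densest colour the remainder is complete multipartite with linear parts, not almost-complete, so the inductive hypothesis does not apply. The missing ingredients are the $(k-1)e(R')$ decomposition and the multipartite no-large-matching lemma; the regularity bookkeeping you worry about is standard and handled wholesale by citing Lemma~\ref{lem:Lucz}.
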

It is interesting to note that odd cycles behave very differently in this context. Recently the second author and Skokan~\cite{JenSko} showed, via analytic methods, that for $k\geq4$ and $n$ odd and sufficiently large, $R_k(C_n)=2^{k-1}(n-1)+1$. 
This resolved a conjecture of Bondy and Erd\H{o}s \cite{BonErd} for large $n$.

Let us now briefly discuss lower bounds. Constructions based on finite affine planes (see \cite{BieGya}) show that $R_k(P_n) \geq (k-1)(n-1)$, when $k-1$ is a prime power and this lower bound is thought to be closer to the truth than our upper bound.  Yongqi, Yuansheng, Feng, and Bingxi~\cite{YYFB} provide a construction which shows that $R_k(C_n) \geq (k-1)(n-2)+2$ for any $k$ and for even $n$. This construction can easily be modified to give a lower bound on $R_k(P_n)$ for any $k$ and any $n$. We sketch this construction below. 

To see that $R_k(P_n)\geq 2(k-1)\left(\floor*{\frac{n}{2}}-1\right)+1$, consider a complete graph $G$ on vertices $\{0,1,\dotsc, 2k-3\}$ and for $1\leq i\leq k-1$ colour the edges from vertex $i$ to vertices $i+1,\dotsc,i+k-2$ and the edges from vertex $i+k-1$ to vertices $i+k,\dotsc,i+2k-3$ (taken modulo $2k-2$) with colour $c_i$. 
Then each colour $c_1,\dotsc,c_{k-1}$ consists of two vertex-disjoint stars, each on $k-1$ vertices. 
The remaining edges are those of the form $\{j,j+k-1\}$ for $j=0,\dotsc,k-2$ which are coloured with the final colour $c_k$. 
The final colour forms a matching on $k-1$ edges.
Construct $G'$ by `blowing up' each vertex $i$ of $G$ into a set $V_i$ of $\floor*{\frac{n}{2}}-1$ vertices and colour the edges within $V_i$ with colour $c_k$. 
Edges between sets $V_i$ and $V_j$ in $G'$ are coloured with the same colour as the edge $\{i,j\}$ in $G$. 

There is no monochromatic $P_n$ in $G'$ because in colours $c_1,\dotsc,c_{k-1}$, components are bipartite with smallest part size $\floor*{\frac{n}{2}}-1$, hence cannot contain a $P_n$.
The components in colour $c_k$ have less than $n$ vertices and so cannot contain a $P_n$. 
Again, this lower bound is generally considered to be closer to the truth than our upper bound.

\section{Methods}

Throughout the paper, we omit floor and ceiling signs whenever they are not crucial. To prove Theorem~\ref{thm:RPn} we will proceed by contradiction. 
We take a complete graph on $N=(k-\frac{1}{4}+\frac{1}{2k})n$ vertices whose edges have been coloured with $k$ colours and suppose it contains no monochromatic $P_n$. 
First we show that the densest colour has only a few components and these are not too large. 
For the other colours we consider the edges between these components and use the multi-partite structure to bound the number of such edges. 
This gives bound on the total number of edges which is less than $\binom{N}{2}$ which is the desired contradiction. 
The proof is given in Section~\ref{sec:Pnproof}.

The regularity method of \L uczak (see e.g.~\cite{FigLuczArxiv,FigLucz,Lucz}) reduces the problem of finding a monochromatic $C_n$ in a $k$-coloured complete graph to that of finding a monochromatic component containing a matching of $\frac{n}{2}$ edges in a \emph{reduced graph}, which is a $k$-coloured graph missing a small fraction of edges. 
We use the term \emph{connected matching of $\frac{n}{2}$ edges} to mean a connected graph which contains a matching of $\frac{n}{2}$ edges. Using this method we prove Theorem~\ref{thm:RCn} via the following result, which we prove in Section~\ref{sec:Reg}. 

\begin{theorem}\label{thm:connM}
Let $k\geq 4$ be a positive integer, and let $0\leq\delta< \frac{1}{64k^2} $. 
Then for even $n\geq 32k$ and $N=(k-\frac{1}{4})n$ the following holds. Suppose that $G$ is a $k$-coloured, $N$-vertex graph with at least $(1-\delta)\binom{N}{2}$ edges, then we may find a monochromatic connected matching of $\frac{n}{2}$ edges in $G$. 
\end{theorem}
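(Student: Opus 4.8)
The plan is to argue by contradiction, mirroring the strategy of Theorem~\ref{thm:RPn} but in the setting of connected matchings in a nearly-complete $k$-coloured graph. So suppose $G$ is a $k$-coloured graph on $N=(k-\tfrac14)n$ vertices with at least $(1-\delta)\binom{N}{2}$ edges and with no monochromatic connected matching of $\tfrac n2$ edges. I would first record the ``Gallai-type'' fact that governs monochromatic components: by a theorem of Gallai (or Gy\'arf\'as), for any $k$-colouring of a complete (or nearly complete) graph the colour classes' components have a cover/partition structure; more importantly, by König's theorem a connected graph with no matching of $\tfrac n2$ edges has a vertex cover of size $<\tfrac n2$, hence at most $n-2$ ``essential'' vertices after deleting isolated-type structure — so each monochromatic component either spans fewer than $n$ vertices, or its edge set lies inside the union of a set of $<\tfrac n2$ vertices with the rest. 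This is the analogue of ``the densest colour has few, not-too-large components'' from the path proof.

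Next I would quantify this. Fix the densest colour, say colour~$1$. Since $G$ has at least $(1-\delta)\binom N2$ edges, colour~$1$ has at least $\tfrac1k(1-\delta)\binom N2$ edges. Using the connected-matching obstruction together with the Erd\H{o}s--Gallai bound (Lemma~\ref{lem:EG}) applied component-by-component — a component on $m<n$ vertices has at most $\tfrac{(n-2)m}{2}$ edges roughly, and a component with a small vertex cover $C$, $|C|<\tfrac n2$, has at most $|C|(N-|C|)+\binom{|C|}{2}$ edges — I would deduce that colour~$1$ has only boundedly many ``large'' components and that, after removing a set $U_1$ of at most $\tfrac n2 + o(n)$ vertices, the remaining colour-$1$ components are each of order $<n$ and are pairwise in distinct ``blocks''; equivalently colour~$1$ induces an (almost) complete multipartite-type structure on $N - o(n)$ vertices with all parts of size $<n$. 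This forces the number of parts $t_1$ to satisfy $t_1 \ge (N-o(n))/n = (k-\tfrac14)/1 - o(1)$, i.e. $t_1 \ge k$ essentially.

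The heart of the argument is then to bound the edges in the remaining $k-1$ colours using this multipartite skeleton, exactly the ``more information about the densest colour'' idea flagged in the introduction. Edges of colours $2,\dots,k$ either lie inside one of the $t_1$ colour-$1$ blocks or go between two blocks; but each colour $j\ge2$ is itself subject to the same connected-matching constraint, so within the block structure colour~$j$ can use at most, say, $(\tfrac n2)(N) + O(n^2)$ edges from its few large components plus $\tfrac{n-2}{2}\cdot(\text{number of vertices in small components})$. Summing the Erd\H{o}s--Gallai/Kopylov bounds over all $k$ colours and over the $\binom{t_1}{2}$ block-pairs, and using $t_1\gtrsim k$ together with $N=(k-\tfrac14)n$, I expect the total to come out strictly below $(1-\delta)\binom N2$ for $\delta<\tfrac1{64k^2}$, which is the contradiction. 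The handling of the $o(n)$ error terms is cleanest if one first passes to the subgraph on $N-o(n)$ vertices where the block structure is exact.

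The main obstacle, I anticipate, is the bookkeeping in the last step: one must choose the ``large component'' threshold and the size of the removed sets $U_j$ carefully so that (i) each colour genuinely looks multipartite with parts of size $<n$ on almost all vertices, and (ii) the per-colour edge bounds, when added, beat $\binom N2$ with room to absorb $\delta\binom N2$ and all the lower-order terms. In particular one needs the interaction between the colour-$1$ partition and the colour-$j$ partitions — a colour-$j$ part could be spread across many colour-$1$ parts — and controlling this cross-structure (rather than treating the colours independently) is exactly where the factor $\tfrac14$ rather than a $1/k^{O(1)}$ saving comes from, so getting the constants to line up is the delicate part. The reduction to the regularity setting (deriving Theorem~\ref{thm:RCn} from Theorem~\ref{thm:connM}) is standard \L uczak-style and I would relegate it to Section~\ref{sec:Reg}.
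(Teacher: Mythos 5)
Your high-level plan (contradiction, examine the densest colour's component structure, bound the remaining colours' edges via a multipartite skeleton) is the right spirit and matches the paper's outline, but there are two substantive problems that prevent the proposal from constituting a proof.

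First, the central structural claim is false. You assert that ``by König's theorem a connected graph with no matching of $\tfrac n2$ edges has a vertex cover of size $<\tfrac n2$.'' König's theorem is a statement about \emph{bipartite} graphs; for a general graph, $\nu(G)<\tfrac n2$ only gives $\tau(G)\leq 2\nu(G)<n$ (take both endpoints of a maximum matching), and $\tau(G)$ can indeed be close to $2\nu(G)$ (e.g.\ disjoint triangles). Everything downstream of this step — the bound $|C|(N-|C|)+\binom{|C|}{2}$ on a component's edges and the resulting ``few large components'' conclusion — is therefore unjustified. Even if one replaces your claim by the correct bound $\tau < n$, the resulting per-component edge bound is roughly $n\bigl(v(H)-\tfrac n2\bigr)$, which is a factor of $2$ worse than Kopylov's bound and is far too weak to give the $k-\tfrac14$ coefficient. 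In fact the paper gets the $\tfrac14$ already for paths from Kopylov's Lemma~\ref{lem:connNoPn}; the reason even cycles can be handled is a further refinement of that bound for $c$-partite graphs with no $\tfrac n2$-matching (Lemma~\ref{lem:partiteNoMatching}), proved by a maximal-matching argument with a separate treatment of vertices of degree $\geq n$. Your proposal has no analogue of that lemma, and your attribution of where the improvement comes from — the interaction between colour-$1$'s partition and colour-$j$'s partitions — is not how the argument works: the paper only ever needs the densest colour's component structure as a partition, and the second-densest colour's edges \emph{between} those parts (a single $c$-partite graph), so no cross-structure between multiple colours' partitions is needed or used.

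Second, the proposal is essentially a plan, not a proof. The ``bookkeeping'' you defer — choosing thresholds for large components, sizing the removed sets, and getting the per-colour edge bounds to sum below $(1-\delta)\binom N2$ with the right constant — is exactly the content of the theorem, and there is no indication of how the constants would line up under your accounting, which sums Erdős–Gallai/Kopylov over all $k$ colours and all $\binom{t_1}{2}$ block-pairs (a heavier decomposition than the paper's, which bounds only the total edges inside blue components plus $(k-1)$ times the red edges between them). Without the analogue of Lemma~\ref{lem:partiteNoMatching} and a precise version of the component-structure claims, the proposal does not close.
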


The statement we use to deduce Theorem~\ref{thm:RCn} from Theorem~\ref{thm:connM} is from a paper of Figaj and \L uczak~\cite[Lemma 3]{FigLuczArxiv}.

\begin{lemma}\label{lem:Lucz}
Let $t > 0$ be a real number. If for every $\eps > 0$
there exists $\delta > 0$ and an $n_1$ such that for every even $n > n_1$ and any $k$-coloured graph $G$ with $v(G) > (1 + \eps)tn$ and $e(G) \geq (1 - \delta)\binom{v(G)}{2}$ has a
monochromatic connected matching of $\frac{n}{2}$ edges, then
$R_k(C_n) \leq (t + o(1))n$.
\end{lemma}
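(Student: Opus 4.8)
\textbf{Proof proposal for Lemma~\ref{lem:Lucz}.}

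The plan is to apply the Szemer\'edi regularity lemma to a $k$-coloured complete graph and pass to the reduced graph, where the hypothesis supplies a monochromatic connected matching; then lift this back to a monochromatic $C_n$ in the original colouring. Fix $t>0$ and let $\eps>0$ be given. Set $N = R_k(C_n)-1$ under the assumption, for contradiction, that $R_k(C_n) > (t+\eps')n$ for some small $\eps'$ to be chosen (so a bad $k$-colouring of $K_N$ with no monochromatic $C_n$ exists); it suffices to show this leads to a contradiction for $n$ large, which gives $R_k(C_n)\le (t+\eps')n$, and since $\eps'$ is arbitrary, $R_k(C_n)\le(t+o(1))n$.

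First I would apply the multicolour regularity lemma to the complete graph $K_N$ with its $k$-colouring: for a suitably small regularity parameter $\gamma = \gamma(\eps,\delta)$ we obtain a partition of $V(K_N)$ into $M$ clusters $V_1,\dots,V_M$ (plus a negligible exceptional set), with $M$ bounded in terms of $\gamma$ but $M\to\infty$ as $n\to\infty$, such that almost every pair $(V_i,V_j)$ is $\gamma$-regular in each of the $k$ colour classes. Next I would build the reduced graph $G$ on vertex set $\{1,\dots,M\}$: include the edge $ij$ whenever $(V_i,V_j)$ is $\gamma$-regular in all colours \emph{and} has density at least, say, $d = d(\delta)$ in at least one colour; colour $ij$ by (one such) majority colour. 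A standard counting argument (discarding irregular pairs, sparse pairs, and the exceptional set, each of which contributes $o(M^2)$ non-edges) shows $e(G)\ge (1-\delta)\binom{M}{2}$, provided $\gamma$ and $d$ are small enough relative to $\delta$. Crucially $v(G) = M$, and since $N > (1+\eps)tn$ while the exceptional set and cluster-size rounding lose only an $\eps$-fraction, we may arrange $M \ge (1+\eps/2)t n'$ where $n' = n\cdot (\text{cluster size})/(N/M)\approx n$; more carefully, one chooses the application so that the relevant threshold ``$v(G) > (1+\eps)tn$'' in the hypothesis is met with $n$ replaced by an appropriately scaled parameter. (This bookkeeping is the point where the constants must be tuned; I would follow the standard \L uczak framework and phrase the regularity lemma application so that the reduced graph has $> (1+\eps)t\tilde n$ vertices for the right $\tilde n$ comparable to $n$.)

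By the hypothesis of the lemma applied to $G$, there is a monochromatic connected matching $\{e_1,\dots,e_m\}$ with $m = \tilde n/2$ edges, all of colour $c$, lying in a single component $H$ of the colour-$c$ subgraph of $G$. Now I would lift this to a monochromatic $C_n$ in $K_N$ by the usual argument: the clusters corresponding to vertices of $H$, connected via $\gamma$-regular colour-$c$ pairs of density $\ge d$, allow one to route a long colour-$c$ path; the matching edges guarantee enough ``capacity'' (one uses roughly half the vertices of the clusters hit by the matching, traversing each matched pair, and the connectivity of $H$ to stitch these segments together). Quantitatively, a $\gamma$-regular pair of density $\ge d$ with both sides of size $\gtrsim \gamma^{-1}$ contains a path through almost all of a balanced vertex subset, and the connectivity of $H$ together with $m\cdot(\text{cluster size})\ge \tfrac{\tilde n}{2}\cdot\frac{N}{M}\cdot(1-o(1)) \ge \tfrac n2$ yields a colour-$c$ path, hence a cycle, on at least $n$ vertices; as $n$ is even, a colour-$c$ $C_n$ exists, contradicting the choice of colouring.

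The main obstacle is the constant-chasing in the second paragraph: one must choose the order of quantifiers and the regularity/density thresholds so that (i) the reduced graph genuinely inherits edge-density $\ge 1-\delta$ for the $\delta$ the hypothesis demands, and (ii) its vertex count exceeds $(1+\eps)t\tilde n$ for a parameter $\tilde n$ close enough to $n$ that the lifted matching produces a cycle of length \emph{at least} $n$ (not merely $\tilde n$, which could be slightly smaller). The cleanest route is to introduce a slack: apply the hypothesis with $\eps/3$ in place of $\eps$ and with $n$ replaced by $(1+\eps/3)^{-1}$ times the effective cluster parameter, so that the connected matching has enough edges that, after the inevitable $o(1)$ losses in regularity, the lifted path still has $\ge n$ vertices. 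All of this is standard in the \L uczak connected-matching method, and since we are permitted to cite \cite{FigLuczArxiv}, I would in fact simply invoke their Lemma~3 verbatim rather than reprove it; the proposal above is the argument that lemma encapsulates.
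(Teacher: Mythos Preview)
The paper does not prove this lemma at all: it is quoted verbatim as \cite[Lemma~3]{FigLuczArxiv} and used as a black box to deduce Theorem~\ref{thm:RCn} from Theorem~\ref{thm:connM}. Your final sentence is therefore exactly right, and is precisely what the paper does.

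Your sketch is the standard \L uczak regularity/connected-matching argument that underlies the cited lemma, and the outline is correct. The only places where your write-up is loose are the ones you yourself flag: the scaling between $n$, $\tilde n$, and the cluster size needs to be set up so that a connected matching of $\tilde n/2$ edges in the reduced graph lifts to a cycle of length \emph{exactly} $n$ (not merely at least $n$); this is where the parity assumption on $n$ and the ability to adjust cycle length by two inside a regular pair are used. But since the paper treats the lemma as a citation, there is nothing further to compare.
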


Theorem~\ref{thm:RCn} follows from Theorem \ref{thm:connM} by applying Lemma~\ref{lem:Lucz} with $t=k-\frac14$ and for any positive $\eps$ choosing $\delta<\frac{1}{64k^2}$ and $n_1\geq32k$. 

The remainder of this paper is devoted to proving Theorems~\ref{thm:RPn} and~\ref{thm:connM}. We will need the following extremal results for graphs not containing an $n$-vertex path. 

\begin{lemma}[Erd\H{o}s--Gallai \cite{erdHos1959maximal}]\label{lem:EG}
Let $H$ be a graph which does not contain an $n$-vertex path. Then 
\[
e(H)\leq\frac{n-2}{2}v(H)\,.
\]
\end{lemma}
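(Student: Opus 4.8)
The plan is to prove the bound by induction on $v(H)$, stripping off easy configurations until we reach a graph dense enough to be forced to contain an $n$-vertex path, contradicting the hypothesis. Throughout, recall that containing no $n$-vertex path means every path in $H$ has at most $n-1$ vertices, equivalently at most $n-2$ edges.

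First, the easy cases. If $v(H) < n$, then $e(H)\le\binom{v(H)}{2}=\frac{v(H)(v(H)-1)}{2}\le\frac{n-2}{2}v(H)$ since $v(H)-1\le n-2$; this covers all small graphs and in particular the base of the induction. Next, if $H$ is disconnected, apply the inductive hypothesis to each component $C$ (each has strictly fewer vertices than $H$) and sum: $e(H)=\sum_C e(C)\le\frac{n-2}{2}\sum_C v(C)=\frac{n-2}{2}v(H)$. So we may assume $H$ is connected with $v(H)\ge n$. Finally, if $H$ has a vertex $u$ with $\deg_H(u)\le\frac{n-2}{2}$, delete it: $H-u$ still contains no $n$-vertex path and has fewer vertices, so by induction $e(H-u)\le\frac{n-2}{2}(v(H)-1)$, whence $e(H)=e(H-u)+\deg_H(u)\le\frac{n-2}{2}v(H)$. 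Hence we may additionally assume $\delta(H)>\frac{n-2}{2}$, and since degrees are integers, $\delta(H)\ge\frac{n-1}{2}$.

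It remains to show that a connected graph $H$ with $v(H)\ge n$ and $\delta(H)\ge\frac{n-1}{2}$ contains an $n$-vertex path, which is the desired contradiction. For this I would prove (or invoke) the standard fact that a connected graph $G$ with minimum degree $d$ contains a path on at least $\min\{v(G),\,2d+1\}$ vertices. The argument is the classical rotation–extension one: take a longest path $P=x_0x_1\cdots x_\ell$; maximality forces every neighbour of $x_0$ and of $x_\ell$ to lie on $P$. If $\ell<2d$, then $|N(x_0)|+|N(x_\ell)|\ge 2d>\ell$, so a counting/pigeonhole argument on the indices $\{0,\dots,\ell-1\}$ yields an $i$ with $x_0\sim x_{i+1}$ and $x_\ell\sim x_i$; this gives a cycle through all of $V(P)$. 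If that cycle is non-spanning, connectivity provides an edge from it to a vertex outside, and rerouting around the cycle and appending this edge produces a path on $\ell+2$ vertices, contradicting maximality; so the cycle is spanning, i.e. $\ell+1=v(G)$. Either way $P$ (or the spanning cycle minus an edge) has at least $\min\{v(G),2d+1\}$ vertices. Applying this with $d=\delta(H)\ge\frac{n-1}{2}$ gives $2d+1\ge n$, and since $v(H)\ge n$ we obtain a path on at least $n$ vertices, which contains $P_n$ — contradiction, completing the induction.

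The main obstacle is this last step: setting up the rotation–extension lemma cleanly, in particular extracting the index $i$ from the degree sum when $\ell<2d$ and correctly using connectivity to rule out a non-spanning cycle. Everything preceding it is routine bookkeeping with the induction, and the three reductions (small graphs, disconnected graphs, low-degree vertex deletion) are exactly what make the final case both connected and of high minimum degree, where the extremal examples — disjoint copies of $K_{n-1}$ — live.
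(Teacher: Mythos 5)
The paper invokes this Erd\H{o}s--Gallai bound as a known result, citing \cite{erdHos1959maximal} without supplying a proof, so there is no in-paper argument to compare against. Your proof is correct and is, in essence, the standard argument for this classical theorem: three reductions by induction on $v(H)$ (small order, disconnectedness, low-degree vertex deletion) bring you to a connected graph on $v(H)\geq n$ vertices with $\delta(H)\geq\frac{n-1}{2}$, and the rotation--extension lemma (a longest path in a connected graph has at least $\min\{v(G),2\delta(G)+1\}$ vertices) then produces an $n$-vertex path, giving the contradiction. All the details check out: the degree-deletion step gives $e(H)=e(H-u)+\deg(u)\leq\frac{n-2}{2}(v(H)-1)+\frac{n-2}{2}$; the integer rounding $\delta(H)>\frac{n-2}{2}\Rightarrow\delta(H)\geq\frac{n-1}{2}$ holds for both parities of $n$; and the pigeonhole on the index sets $\{i:x_{i+1}\in N(x_0)\}$ and $\{i:x_i\in N(x_\ell)\}$ in $\{0,\dots,\ell-1\}$ together with the connectivity step is exactly the rotation--extension argument. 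This is a complete proof of the cited lemma.
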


The following simplified version of a result due to Kopylov~\cite{kopylov1977maximal} improves on the above result for  \emph{connected} graphs.

\begin{lemma}\label{lem:connNoPn}
Let $H$ be a connected graph which does not contain an $n$-vertex path. Then 
\[
e(H)\leq\frac{n}{2}\max\left\{n,v(H)-\frac{n}{4}\right\}.
\]
\end{lemma}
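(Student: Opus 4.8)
The plan is to treat the cases $v(H)$ large and $v(H)$ small separately, and in the small case to invoke the precise (but less clean) extremal theorem of Kopylov and then simplify the bound it produces.

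First, whenever $v(H)\geq n^2/8$ the claimed inequality is weaker than the Erd\H{o}s--Gallai bound: the inequality $\frac{n-2}{2}v(H)\leq \frac n2\bigl(v(H)-\tfrac n4\bigr)$ is equivalent to $v(H)\geq n^2/8$, and $v(H)-\tfrac n4\geq n$ in this range, so Lemma~\ref{lem:EG} already gives the claim. (For $v(H)<n$ the statement is trivial, since then $e(H)\leq\binom{v(H)}{2}<\binom n2<\tfrac{n^2}{2}$.) Hence we may assume $n\leq v(H)<n^2/8$.

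In this regime I would appeal to Kopylov's theorem, which determines the maximum number of edges in a $2$-connected — and, via a reduction along the block--cut tree, in any connected — graph on $m$ vertices with no $n$-vertex path. Its extremal configurations come in two shapes: a clique $K_{n-2}$ with a forest hung off a single vertex (and mild generalisations), with at most roughly $\binom{n-2}{2}+(m-n+2)\leq\tfrac{n^2}{2}$ edges; and a graph obtained by joining a clique $K_a$ with $a\approx n/2$ to a disjoint union of a clique and an independent set, with at most roughly $\binom{n/2}{2}+\tfrac n2\bigl(m-\tfrac n2\bigr)=\tfrac n2\bigl(m-\tfrac n4\bigr)$ edges. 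Thus Kopylov's bound has the form $e(H)\leq\max\{f_1(m,n),f_2(m,n)\}$ for explicit $f_1,f_2$ built from $\binom{\cdot}{2}$ terms and the parameter $\lfloor(n-1)/2\rfloor$, and it remains to check $f_1(m,n)\leq\tfrac{n^2}{2}$ and $f_2(m,n)\leq\tfrac n2\bigl(m-\tfrac n4\bigr)$ for all $m$ in the relevant range and all $n$ at least the stated threshold. Since $\tfrac{n^2}{2}\leq\tfrac n2\max\{n,m-\tfrac n4\}$ and $\tfrac n2\bigl(m-\tfrac n4\bigr)\leq\tfrac n2\max\{n,m-\tfrac n4\}$ trivially, this yields the lemma.

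For the reduction from connected to $2$-connected: if $v$ is a cut vertex of $H$ splitting it into connected subgraphs $H_1,H_2$ meeting only in $v$, then a longest path of $H_1$ ending at $v$ concatenates with a longest path of $H_2$ ending at $v$ to give a path of $H$, so the numbers of vertices on these two paths sum to at most $n$. In particular one side, say $H_1$, has no path on more than $n/2$ vertices through $v$, which forces $H_1$ to be, up to a bounded deficiency, of the "$K_{n-2}$ plus forest" type on its own vertex set, and this lets the induction on $v(H)$ close. I expect the main obstacle to be precisely this bookkeeping: verifying that Kopylov's exact maximum is dominated by the clean quantity $\tfrac n2\max\{n,v(H)-\tfrac n4\}$ uniformly over the parameter range, and that the cut-vertex composition of these bounds never exceeds the target (this is presumably where the lower bound on $n$ enters). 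Everything else — the two regimes, the block--cut tree structure, and the path-concatenation observation — is routine.
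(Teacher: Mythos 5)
The paper does not prove this lemma at all: it is stated as a ``simplified version of a result due to Kopylov'' and given with a citation only, so there is no in-paper argument to compare against. Your plan is essentially the intended derivation of the clean bound from Kopylov's exact theorem, and the arithmetic you gesture at does check out. Concretely, for connected $P_n$-free graphs on $m\geq n$ vertices Kopylov gives $e(H)\leq\max\{h(m,n,1),h(m,n,\lfloor(n-1)/2\rfloor)\}$ with $h(m,n,a)=\binom{n-a}{2}+a(m-n+a)$; a short computation shows $h(m,n,\lfloor(n-1)/2\rfloor)\leq\frac{n}{2}\bigl(m-\frac{n}{4}\bigr)$ for all $m\geq n$, and $h(m,n,1)\leq\frac{n^2}{2}$ when $m\leq\frac{5n}{4}$ while $h(m,n,1)\leq\frac{n}{2}\bigl(m-\frac{n}{4}\bigr)$ when $m\geq\frac{5n}{4}$, which together give the lemma. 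For $m<n$ the bound is trivial, and your Erd\H{o}s--Gallai reduction correctly disposes of $m\geq n^2/8$ (though this regime is already covered by the Kopylov comparison above).

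Two small points of imprecision. Your assertion that the ``$K_{n-2}$ plus pendant forest'' branch satisfies $\binom{n-2}{2}+(m-n+2)\leq\frac{n^2}{2}$ is not uniform in $m$; it holds only up to about $m\leq\frac{7n}{2}$, and for larger $m$ that branch must instead be compared against $\frac{n}{2}\bigl(m-\frac{n}{4}\bigr)$, where it also holds --- so your conclusion is right but the phrasing suggests a single comparison suffices. More substantively, the detour through $2$-connectivity and a block--cut-tree induction is unnecessary and is precisely the source of the bookkeeping you flag as the main obstacle: Kopylov's theorem is available directly for connected graphs (this is the form used in the surrounding literature, including by S\'ark\"ozy), so you can apply it as a black box to each value of $m$ and skip the cut-vertex reduction entirely.
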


Our next result gives a slight improvement of Lemma~\ref{lem:connNoPn} under the additional assumption that $H$ is $c$-partite. 
In this case the bound on $e(H)$ can be improved when $v(H)$ is small.

\begin{lemma}\label{lem:partiteNoPn}
Let $H$ be a $c$-partite connected graph which does not contain an $n$-vertex path. 
Then 
\[
e(H)\leq
\begin{cases}
\left(1-\frac{1}{c}\right)\frac{v(H)^2}{2} & \text{ for } v(H) \leq n\sqrt{\frac{c}{c-1}}\,,\\
\frac{n^2}{2} & \text { for } n\sqrt{\frac{c}{c-1}} < v(H) \leq \frac{5n}{4}\,,\\
\frac{n}{2}\left(v(H)-\frac{n}{4}\right) & \text{ for }  \frac{5n}{4} < v(H)\,.
\end{cases}
\]
\end{lemma}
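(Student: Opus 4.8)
The plan is to combine Lemma~\ref{lem:connNoPn} with the elementary Turán-type bound for $c$-partite graphs, splitting into the three ranges of $v(H)$ that appear in the statement. Write $v = v(H)$. Since $H$ is $c$-partite, we always have the trivial bound $e(H) \leq \left(1-\frac1c\right)\frac{v^2}{2}$ (the complete $c$-partite graph on $v$ vertices is densest when the parts are as equal as possible, and this is an upper bound on its edge count). On the other hand, since $H$ is connected and $P_n$-free, Lemma~\ref{lem:connNoPn} gives $e(H) \leq \frac{n}{2}\max\{n, v - \frac n4\}$, which is $\frac{n^2}{2}$ when $v \leq \frac{5n}{4}$ and $\frac n2(v-\frac n4)$ when $v > \frac{5n}{4}$. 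So in the third range $v > \frac{5n}{4}$ the claimed bound is exactly Lemma~\ref{lem:connNoPn}, and nothing further is needed there.

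For the first range, $v \leq n\sqrt{\frac{c}{c-1}}$, I would simply use the $c$-partite bound $e(H) \leq \left(1-\frac1c\right)\frac{v^2}{2}$, which is what is claimed. For the middle range, $n\sqrt{\frac{c}{c-1}} < v \leq \frac{5n}{4}$, Lemma~\ref{lem:connNoPn} already gives $e(H) \leq \frac{n^2}{2}$ (since $v \le \frac{5n}{4}$ forces the max to be $n$), which is again exactly the claimed bound. So at this level the lemma is essentially immediate from the two prior bounds, with the three cases arranged precisely so that in each range the better of the two applies.

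The one point requiring a small check is consistency at the breakpoints — i.e. that the three cases genuinely fit together into an upper bound with no gap. At $v = n\sqrt{\frac{c}{c-1}}$ the first-case value $\left(1-\frac1c\right)\frac{v^2}{2}$ equals $\frac{n^2}{2}$, matching the second case; and one should confirm $n\sqrt{\frac{c}{c-1}} \le \frac{5n}{4}$ so the middle range is nonempty (this holds for all $c \ge 3$, since $\sqrt{c/(c-1)} \le \sqrt{3/2} < 5/4$; for $c \le 2$ a connected bipartite $P_n$-free graph is small and the statement degenerates harmlessly, or one restricts to $c \ge 3$ as is the case in our application). At $v = \frac{5n}{4}$ the second-case value $\frac{n^2}{2}$ equals the third-case value $\frac n2\left(\frac{5n}{4} - \frac n4\right) = \frac n2 \cdot n = \frac{n^2}{2}$, so those agree as well.

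The main (very minor) obstacle is therefore purely bookkeeping: verifying that the $c$-partite Turán bound is indeed the right one to use below the crossover point and that Lemma~\ref{lem:connNoPn} takes over above it, with the crossover occurring exactly at $v = n\sqrt{c/(c-1)}$, and then checking the inequality $\left(1-\frac1c\right)\frac{v^2}{2} \le \frac{n^2}{2}$ holds precisely for $v \le n\sqrt{c/(c-1)}$. I would write this up as: state the two universal bounds ($c$-partite Turán; Lemma~\ref{lem:connNoPn}), then observe that in each of the three ranges one of them equals the claimed expression and is at least as small as the other, completing the proof.
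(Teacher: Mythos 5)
Your proposal is correct and takes essentially the same approach as the paper: the small range uses the elementary $c$-partite Turán bound (ignoring the $P_n$-free hypothesis), and the medium and large ranges read off Lemma~\ref{lem:connNoPn} directly (ignoring the $c$-partite hypothesis). The breakpoint checks you include are a fine sanity check but not strictly necessary since each case just invokes whichever of the two universal bounds is stated there.
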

\begin{proof}
In the `small' case $v(H) \leq n\sqrt{\frac{c}{c-1}}$ we simply use that a $c$-partite graph has at most as many edges as the complete balanced $c$-partite graph on the same number of vertices.
Therefore we conclude that $e(H)\leq \left(1-\frac{1}{c}\right)\frac{v(H)^2}{2}$ without the assumption that $H$ contains no copy of $P_n$. 

The `medium' case where $n\sqrt{\frac{c}{c-1}} < v(H) \leq \frac{5n}{4}$ and the remaining `large' case follow directly from Lemma~\ref{lem:connNoPn} and make no use of the $c$-partite assumption on $H$.
\end{proof}

Lemma~\ref{lem:partiteNoPn} is already strong enough for us to prove Theorem~\ref{thm:RPn}, however we require another modification to prove Theorem~\ref{thm:RCn}.
We will defer its proof to Section~\ref{sec:Reg}.

\begin{lemma}\label{lem:partiteNoMatching}
Let $H$ be a $c$-partite connected graph which does not contain a matching of $\frac{n}{2}$ edges. 
Suppose further that there is a $c$-partition of $H$ such that the sum of the sizes of any two parts is at least $n$.
Then 
\[
e(H)\leq
\begin{cases}
\left(1-\frac{1}{c}\right)\frac{v(H)^2}{2} & \text{ for } v(H) \leq n\sqrt{\frac{c}{c-1}}\,,\\
\frac{n^2}{2} & \text { for } n\sqrt{\frac{c}{c-1}} < v(H) \leq \frac{5n}{4}\,,\\
\frac{n}{2}\left(v(H)-\frac{n}{4}\right) & \text{ for } \frac{5n}{4} < v(H) < \frac{31n}{16}\,,\\
\frac{n}{2}\left(v(H)-\frac{7n}{16}\right) & \text{ for } \frac{31n}{16} \leq v(H) \,.
\end{cases}
\]
\end{lemma}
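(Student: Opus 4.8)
The plan is to mirror the structure of Lemma~\ref{lem:partiteNoPn}, handling the first three ranges of $v(H)$ exactly as before and then doing genuine work in the new regime $v(H)\ge\frac{31n}{16}$. For the `small' case the $c$-partite bound $e(H)\le(1-\tfrac1c)\tfrac{v(H)^2}{2}$ holds with no assumption whatsoever, so that range is immediate. For the two middle ranges I want to reduce to Lemma~\ref{lem:connNoPn}; the point is that a connected graph on $v(H)$ vertices containing a matching of $\tfrac n2$ edges contains a path on $n$ vertices is \emph{false} in general, but the extra hypothesis---that in some $c$-partition every two parts have total size at least $n$---is exactly what lets us recover a long path from a large matching. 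Concretely, I would argue that a connected $c$-partite graph whose largest matching has $<\tfrac n2$ edges, under the two-parts-at-least-$n$ condition, also contains no $P_n$ (or at least: has no more edges than a graph with no $P_n$ on the same vertex count), so that Lemma~\ref{lem:connNoPn} applies verbatim and yields the bounds $\tfrac{n^2}{2}$ and $\tfrac n2\bigl(v(H)-\tfrac n4\bigr)$ in the stated ranges; this needs the threshold $v(H)<\tfrac{31n}{16}$ so that $v(H)-\tfrac n4<\tfrac{27n}{16}<2n$, keeping the Kopylov bound in its relevant branch.

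The heart of the lemma is the last branch, $v(H)\ge\frac{31n}{16}$, where we must improve the slope-preserving bound $\tfrac n2\bigl(v(H)-\tfrac n4\bigr)$ to $\tfrac n2\bigl(v(H)-\tfrac7{16}n\bigr)$, i.e. save an extra $\tfrac n2\cdot\tfrac3{16}n=\tfrac{3n^2}{32}$ edges. The strategy here is a Gallai--Edmonds / deficiency-type argument: if $H$ is connected with no matching of size $\tfrac n2$, then by the Berge--Tutte formula there is a set $S$ with $o(H-S)\ge v(H)-2\lfloor\tfrac n2\rfloor+|S|+1 = v(H)-n+|S|+1$ odd components after deletion of $S$. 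Write $s=|S|$. All edges of $H$ lie either inside $N[S]$-controlled pieces or touch $S$: more precisely, every edge goes from $S$ to something, or lies inside one of the components of $H-S$. Each component of $H-S$ is itself a connected $c$-partite graph with no matching of $\tfrac n2$ edges, so recursively (or by the base bound) its edge count is controlled; and crucially many of these components must be small, because there are at least $v(H)-n+s+1$ of them and they partition $v(H)-s$ vertices. I would bound $e(H)\le \binom s2 + s\cdot(v(H)-s) + \sum_i e(\text{component}_i)$, but that overcounts edges from $S$; better to bound edges incident to $S$ by $s\cdot v(H)$ minus a correction, then optimise over $s$. The two-parts-at-least-$n$ hypothesis re-enters to force $s$ to be reasonably large (a very small $S$ would leave a component meeting two large parts, hence with a big matching), and plugging the resulting range of $s$ into the edge count gives the $\tfrac7{16}n$ intercept.

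The main obstacle I anticipate is pinning down the correct deficiency bookkeeping: one must simultaneously (i) use that the many odd components of $H-S$ are small on average to bound the edges \emph{within} $H-S$ well below $\tfrac n2\cdot(v(H)-s)$, and (ii) use the two-parts condition to bound $s$ from below, since otherwise a short computation only recovers the weaker $\tfrac n2(v(H)-\tfrac n4)$ bound. Balancing these---choosing the worst-case configuration, presumably $S$ together with a bounded number of components of size roughly $n$ and many isolated or tiny components---and checking the breakpoint falls exactly at $v(H)=\tfrac{31n}{16}$ (where the old and new linear bounds must agree: $\tfrac n2(\tfrac{31}{16}-\tfrac14)n=\tfrac n2\cdot\tfrac{27}{16}n$ versus $\tfrac n2(\tfrac{31}{16}-\tfrac7{16})n=\tfrac n2\cdot\tfrac{24}{16}n$---these do \emph{not} agree, so the intended reading must be that the new bound is the smaller one everywhere it applies and continuity at the breakpoint is not required, only that each piece is a valid upper bound) is the delicate part of the argument, and is why the authors defer the proof to Section~\ref{sec:Reg}.
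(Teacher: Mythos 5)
Your treatment of the first three ranges reaches the right conclusion but by a confused route. You worry about whether "a large matching yields a long path," which is the wrong direction and in any case not needed: for even $n$ a copy of $P_n$ trivially contains a matching of $\frac n2$ edges, so ``no matching of $\frac n2$ edges'' implies ``no $P_n$'' immediately, and Lemma~\ref{lem:partiteNoPn} applies verbatim. This step uses nothing about the $c$-partition or the two-parts-at-least-$n$ hypothesis. Your side remark that you need $v(H)-\frac n4<2n$ to keep ``the Kopylov bound in its relevant branch'' is also off the mark; Lemma~\ref{lem:connNoPn} holds for all $v(H)$, and the cutoff at $\frac{31n}{16}$ marks only where the paper's \emph{improved} bound takes over, not where the old one stops being valid. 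Your observation that the two linear bounds disagree at the breakpoint and that continuity is not required is correct.

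For the key range $v(H)\ge\frac{31n}{16}$ you propose a genuinely different strategy (Berge--Tutte deficiency via a set $S$), but you do not carry it out, and by your own admission the bookkeeping, the lower bound on $|S|$, and the extraction of the exact constant $\frac7{16}$ are all left open. This is the entire content of the lemma, so as written the proposal has a real gap. The paper's argument is more hands-on: set $A=\{v:d(v)\ge n\}$ and let $M$ be a maximal matching in $H'=H\setminus A$. Greedy extension of $M$ through $A$ forces $|A|\le\frac n2-\frac{v(M)}2$. Because $M$ is maximal in $H'$, every edge of $H'$ meets $M$, and for each edge $\{u_i,v_i\}\in M$ at least one endpoint has at most one neighbour outside $V(M)$; summing gives $e(H')\le\frac{v(M)}2\bigl(n+\frac{v(M)}4\bigr)$. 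The two-parts-at-least-$n$ hypothesis is then used \emph{only} to bound edges incident to $A$: if the smallest part has size $t\le|A|$, at most $t$ vertices of $A$ can see more than $v(H)-(n-t)$ vertices, and optimising over $t$ (and separately handling $t\ge|A|$) yields a quadratic in $|A|$ that one maximises over $[0,\frac n2]$. The endpoint $|A|=\frac n2$ dominates precisely when $v(H)\ge\frac{31}{16}n$, which is where that threshold comes from. It is conceivable a deficiency-set argument could be made to work, but your intuition that a small $S$ must leave a component spanning two large parts is not obviously correct (components of $H-S$ need not meet several parts of the $c$-partition at all), so you would need a different way to bring the two-parts hypothesis to bear---and in the paper's proof that hypothesis enters as a degree bound, not as a lower bound on the cut set.
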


\section{Paths}\label{sec:Pnproof}

\begin{proof}[Proof of Theorem~\ref{thm:RPn}] 
Let $\alpha = \frac14-\frac{1}{2k}$ and let $G$ be a $k$-coloured complete graph on $N=(k-\alpha)n$ vertices. Let `blue' be one of these colours. 
We proceed by contradiction, supposing that $G$ contains no monochromatic $n$-vertex path. 
Over all such $G$ consider the one in which blue has the most edges. 
In particular $G$ has at least as many blue edges as any other colour. 
The main idea of our argument is to use bounds on the sizes and the number of blue components to bound the number of edges of $G$ which lie inside blue components, and then to bound the number of edges in each other colour that lie between different blue components.

Let $B$ denote the blue subgraph of $G$ and let $B_1,\dotsc, B_c$ be the connected components of $B$. 
Let `red' be the colour that has the most edges lying between blue connected components and let $R'$ denote the $c$-partite graph of those red edges.
We will prove the following two bounds.
Firstly the number of edges (of any colour) within blue components satisfies
\begin{equation}\label{eq:edgesinblue}
\sum_{i=1}^c \binom{v(B_i)}{2} \leq \left(k-2\alpha+5\alpha^2\right)\frac{n^2}{2}\,,
\end{equation}
and secondly the number of red edges between blue components satisfies
\begin{equation}\label{eq:rededgesbetweenblue}
e(R') \leq \left(k-\alpha-\frac{1}{4}\right)\frac{n^2}{2}\,.
\end{equation}
It follows that
\[
e(G) \leq (k-1)e(R')+\sum_{i=1}^c \binom{v(B_i)}{2}
\leq \bigg((k-1)(k-\alpha -\tfrac14) +(k-2\alpha +5\alpha^2)\bigg)\frac{n^2}{2}\,.
\]
Since $e(G)=\binom{N}{2}=(k-\alpha)(k-\alpha-\frac{1}{n})\frac{n^2}{2}$ it is easy to verify that this fails for $\alpha=\frac{1}{4}-\frac{1}{2k}$ and $n\geq 64k$, reaching the desired contradiction.

We now proceed with proving inequalities \eqref{eq:edgesinblue} and \eqref{eq:rededgesbetweenblue}. 
As a first step toward proving \eqref{eq:edgesinblue}, we establish bounds on the size of blue components. 
We first argue that there cannot be large blue components.

\begin{claim}\label{clm:noBlue5n/4}
There is no blue component in $G$ on more than $\frac{5n}{4}$ vertices.
\end{claim}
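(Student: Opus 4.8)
The plan is to argue by contradiction, exploiting two features of $G$: blue is a colour with the most edges, and $G$ was chosen, among all $k$-colourings of $K_N$ with no monochromatic $P_n$, to maximise the number of blue edges. So suppose $B_1$ is a blue component with $v(B_1)=m>\frac{5n}{4}$.

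My first move would be a local recolouring. If there is a \emph{non-blue} edge $uv$ with both endpoints in $V(B_1)$ such that $B_1+uv$ still contains no $P_n$, then recolour $uv$ blue. The resulting colouring has no monochromatic $P_n$ — the colour that loses $uv$ can only lose paths, and the new blue graph is $B_1+uv$ together with the unchanged components $B_2,\dots,B_c$, hence $P_n$-free — but it has strictly more blue edges, contradicting the choice of $G$. Thus we may assume no such edge exists; equivalently, $B_1$ is edge-maximal with respect to not containing $P_n$.

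The substance is then the edge-maximal case, and this is where I expect the real difficulty. Here I would combine Lemma~\ref{lem:connNoPn}, which since $m>\frac{5n}{4}$ gives $e(B_1)\le\frac{n}{2}\bigl(m-\frac{n}{4}\bigr)$ — a saving of $\frac{n^2}{8}-m$ edges over the Erd\H{o}s--Gallai bound $\frac{n-2}{2}m$ — with the inequality $e(B)\ge\frac1k\binom{N}{2}$ coming from blue being a densest colour. Applying Lemma~\ref{lem:EG} to $B_2,\dots,B_c$ yields $e(B)\le\frac{n}{2}\bigl(m-\frac{n}{4}\bigr)+\frac{n-2}{2}(N-m)=m-\frac{n^2}{8}+\frac{(n-2)N}{2}$, and for $n\ge 64k$ a short computation shows this is already below $\frac1k\binom{N}{2}$ whenever $m$ is not too large, disposing of all moderate component sizes. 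The remaining regime of very large $m$ (close to $N$) is the main obstacle: there the crude Erd\H{o}s--Gallai estimate on the other colours inside $V(B_1)$ is too weak, and one must instead use that an edge-maximal connected $P_n$-free graph on many vertices is essentially a join $K_s\vee\overline{K}_{m-s}$ with $2s<n$, so $V(B_1)$ contains a blue-independent set $I$ of size $t>m-\frac n2>\frac{3n}{4}$; all $\binom{t}{2}$ edges spanned by $I$ are non-blue, so some colour carries at least $\frac{1}{k-1}\binom{t}{2}$ of them, and playing this off against the fact that blue has at least as many edges as that colour (together with the edge count above) forces the contradiction. Making this final structural–counting step yield the clean threshold $\frac54$ uniformly for all $n\ge 64k$ is the delicate part of the argument.
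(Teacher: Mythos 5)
Your opening recolouring step and the proposed structural detour are unnecessary, and the claimed obstacle for large $m$ is not real; the paper closes the claim with the single computation you begin, valid uniformly in $m$.

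Concretely, after applying Lemma~\ref{lem:connNoPn} to $B_1$ and Lemma~\ref{lem:EG} to the rest of the blue graph, the $m$-dependence essentially cancels: writing $m=\beta n$, the paper gets
\[
e(B)\leq\Bigl(\beta-\tfrac14\Bigr)\tfrac{n^2}{2}+\bigl(k-\alpha-\beta\bigr)\tfrac{n^2}{2}=\Bigl(k-\alpha-\tfrac14\Bigr)\tfrac{n^2}{2},
\]
which is independent of $\beta$. In your version (keeping $\frac{n-2}{2}$ for Erd\H{o}s--Gallai) the residual $m$-term is just $+m\leq N<kn$, which is negligible against $\frac{n^2}{8}$ once $n\geq 64k$. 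Comparing against $e(B)\geq\frac1k\binom{N}{2}$ then already yields a contradiction for all $m>\frac{5n}{4}$, not only ``moderate'' $m$. Your diagnosis that ``the crude Erd\H{o}s--Gallai estimate on the other colours inside $V(B_1)$ is too weak'' misreads the structure of the argument: the proof never estimates other colours inside $B_1$; it bounds only the blue graph and then uses the pigeonhole inequality $e(G)\leq k\cdot e(B)$. Consequently, the planned edge-maximality/recolouring step and the $K_s\vee\overline{K}_{m-s}$ structural analysis are not needed and constitute a wrong turn rather than a refinement. The first paragraph of your argument, carried through to its end with $m\leq N$ plugged in, is already the paper's proof.
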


\begin{claimproof}
For contradiction, suppose there is a blue component $B_1$ on $\beta n$ vertices with $\beta> \frac54$. 
In this case, by Lemma~\ref{lem:connNoPn} we have $e(B_1)\leq(\beta-\frac14)\frac{n^2}{2}$.
Using Lemma~\ref{lem:EG} on the rest of the blue graph, $B$, we obtain
\[
e(B) \leq \left(\beta-\frac{1}{4}\right)\frac{n^2}{2}+\left(k-\alpha-\beta\right)\frac{n^2}{2} = \bigg(k-\alpha - \frac14 \bigg)\frac{n^2}{2}\,.
\]
Since blue is the densest colour we have $e(G)\leq k\cdot e(B)$ and hence
\begin{align*}
\binom{N}{2}=(k-\alpha)\left(k-\alpha-\frac{1}{n}\right)\frac{n^2}{2} &\leq k \left(k-\alpha-\frac{1}{4}\right)\frac{n^2}{2}\\
\alpha^2-k\alpha+\frac{k}{4}-\frac{k-\alpha}{n} &\leq 0\,.
\end{align*}
This fails when $\alpha = \frac14 - \frac{1}{2k}$ and $n \geq 64k$.
\end{claimproof}

The main application of Claim~\ref{clm:noBlue5n/4} is that now when applying Lemma~\ref{lem:connNoPn} to a blue component $B_i$ we obtain the bound $e(B_i)\leq \frac{n^2}{2}$. 
Using this fact we get a tighter bound on the size of blue components. 
Let $x$ be defined by the equation
\[
xn = \sum_{i=1}^c\max\{v(B_i)-n,0\}\,.
\]
We refer to $x$ as the \emph{excess} size of blue components. The motivation for this definition is that we expect blue components to be of size approximately $n$.
\begin{claim}\label{clm:smallx}
We have $x< \alpha$.
\end{claim}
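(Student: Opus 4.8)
The plan is to argue by contradiction, assuming $x \geq \alpha$, and to derive a contradiction by showing that the blue graph $B$ would then have too few edges relative to what is forced by the pigeonhole bound $e(G) \leq k \cdot e(B)$. The key observation is that Claim~\ref{clm:noBlue5n/4} places every blue component $B_i$ in the range $v(B_i) \leq \frac{5n}{4}$, and in this regime Lemma~\ref{lem:connNoPn} only gives $e(B_i) \leq \frac{n^2}{2}$, which is \emph{wasteful} for components whose size exceeds $n$: such a component $B_i$ with $v(B_i) = (1+x_i)n$ (where $x_i = \frac{1}{n}\max\{v(B_i)-n, 0\}$, so $\sum_i x_i = x$) contributes $x_i n$ vertices of ``excess'' but gains essentially no extra edges beyond $\frac{n^2}{2}$. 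The intuition recorded just before the claim is exactly this: blue components ``want'' to be of size about $n$, and excess size is inefficient.

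First I would split the blue components into those with $v(B_i) \leq n$ and those with $v(B_i) > n$. For the small ones, Lemma~\ref{lem:connNoPn} (or just Lemma~\ref{lem:EG}) gives $e(B_i) \leq \frac{n-2}{2} v(B_i) \leq \frac{n}{2}v(B_i)$, and more to the point a bound of the shape $e(B_i) \leq \frac{v(B_i)^2}{2}$; for the large ones with $n < v(B_i) \leq \frac{5n}{4}$ we have the flat bound $e(B_i) \leq \frac{n^2}{2}$ from the medium case of Lemma~\ref{lem:connNoPn} together with Claim~\ref{clm:noBlue5n/4}. Summing, if there are $c$ components of which, say, those indexed by a set $L$ are the large ones, then
\[
e(B) \;\leq\; \sum_{i \notin L}\frac{v(B_i)^2}{2} \;+\; |L|\cdot\frac{n^2}{2}\,.
\]
Now I would use $\sum_i v(B_i) = N = (k-\alpha)n$ and the fact that each large component already accounts for more than $n$ vertices to bound $|L|$ and the total count. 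Writing $\sum_{i\notin L} v(B_i) = (k-\alpha)n - \sum_{i\in L} v(B_i) = (k-\alpha)n - (|L|n + xn)$ and using convexity/the constraint $v(B_i)\le n$ on the small components to control $\sum_{i\notin L} v(B_i)^2$, one obtains an upper bound on $e(B)$ that is a decreasing function of $x$ — each unit of excess size costs roughly $\frac{n^2}{2}$ in the bookkeeping because it moves a vertex into the ``flat'' regime where no further edges are credited. Concretely this should yield something like $e(B) \leq (k-\alpha-x)\frac{n^2}{2} + (\text{lower order})$, so that assuming $x \geq \alpha$ forces $e(B) \leq (k-2\alpha)\frac{n^2}{2} + o(n^2)$.

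Combining with $\binom{N}{2} = e(G) \leq k\cdot e(B)$ gives $(k-\alpha)(k-\alpha-\frac1n)\frac{n^2}{2} \leq k(k-2\alpha)\frac{n^2}{2} + o(n^2)$, i.e.\ after dividing by $\frac{n^2}{2}$ and rearranging, a quadratic inequality in $\alpha$ of the form $\alpha^2 - k\alpha + 2k\alpha - (k-\alpha)\tfrac1n + o(1) \le 0$, which should fail for $\alpha = \frac14 - \frac{1}{2k}$ and $n \geq 64k$ by the same kind of elementary check used in the proof of Claim~\ref{clm:noBlue5n/4}. The main obstacle, and the step that needs care rather than cleverness, is getting the ``cost of excess'' bookkeeping tight enough: one must be careful that a large component of size close to $\frac{5n}{4}$ really does only get credited $\frac{n^2}{2}$ edges (this is where Claim~\ref{clm:noBlue5n/4} is essential to stay out of the linear ``large'' regime of Lemma~\ref{lem:connNoPn}), and that redistributing vertices among the small components to maximize $\sum v(B_i)^2$ subject to $v(B_i)\le n$ does not recover the lost edges. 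I expect the cleanest route is to prove a per-component inequality $e(B_i) \leq \frac{n}{2}v(B_i) - \frac{n}{2}\max\{v(B_i)-n,0\}\cdot(\text{const})$ or simply $e(B_i) \le \frac{n^2}{2} + \frac{n}{2}\min\{v(B_i),n\} - \frac n2 \cdot \mathbf{1}[v(B_i)\le n]\cdot(\dots)$ valid on the whole range $v(B_i)\le \frac{5n}{4}$, sum it, and plug in — the rest is routine.
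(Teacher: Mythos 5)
Your approach is essentially the paper's: bound each large component (necessarily of size between $n$ and $\tfrac{5n}{4}$ by Claim~\ref{clm:noBlue5n/4}, hence at most $\tfrac{n^2}{2}$ edges by Lemma~\ref{lem:connNoPn}), bound the remaining components by Erd\H{o}s--Gallai to get $e(B) \le (k-\alpha-x)\tfrac{n^2}{2}$, and combine with $e(G)\le k\cdot e(B)$. Indeed the per-component inequality $e(B_i)\le \tfrac{n}{2}v(B_i) - \tfrac{n}{2}\max\{v(B_i)-n,0\}$ you conjecture at the end is exactly what the paper's argument amounts to; the only difference is cosmetic (you frame it as contradiction from $x\ge\alpha$, whereas the paper derives $x\le\alpha-\tfrac{\alpha^2}{k}+\tfrac{k-\alpha}{kn}$ directly), and the final quadratic you wrote has an algebra slip — the correct reduction is $\alpha^2 \le \tfrac{k-\alpha}{n}$, which fails for $\alpha=\tfrac14-\tfrac{1}{2k}$ and $n\ge 64k$.
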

\begin{claimproof}
Let $B_1,\dotsc,B_\ell$ be the blue components with more than $n$ vertices. 
By Lemma~\ref{lem:connNoPn} and Claim~\ref{clm:noBlue5n/4} we have that there are at most $\frac{n^2}{2}$ edges in each of $B_1,\dotsc,B_\ell$.
Using Lemma~\ref{lem:EG} on the rest of the blue graph we have
\[
e(B) \leq \ell\frac{n^2}{2} + (k-\alpha-\ell-x)\frac{n^2}{2} =(k-\alpha - x)\frac{n^2}{2}\,.
\]
Since blue is the densest colour we have $e(G)\leq k\cdot e(B)$, and so
\[
\binom{N}{2}=(k-\alpha)\left(k-\alpha-\frac{1}{n}\right)\frac{n^2}{2} \leq k(k-\alpha-x)\frac{n^2}{2}
\]
therefore
\[
x \leq \alpha -\frac{\alpha^2}{k} + \frac{k-\alpha}{kn}\,,
\]
and in particular $x<\alpha$ for $\alpha=\frac14-\frac{1}{2k}$ and $n\geq 64k$.
\end{claimproof}

With this bound on the excess, we can prove \eqref{eq:edgesinblue}, completing the first part of the proof.

\begin{claimproof}[Proof of inequality~\eqref{eq:edgesinblue}]
By convexity, $\sum_{i=1}^c\binom{v(B_i)}{2}$ is maximised when there is one blue component of size $(1+x)n$ which has all the excess, $(k-2)$ components of size $n$ and one component of size $(1-\alpha-x)n$. Note that this is at least $n/2$ as $x,\alpha < 1/4$.
It follows that
\begin{align*}
\sum_{i=1}^c \binom{v(B_i)}{2} 
&\leq \left((1+x)^2+(k-2)+(1-\alpha-x)^2\right)\frac{n^2}{2}\\
&=\left(k -2\alpha + \alpha^2 + 2\alpha x + 2x^2\right)\frac{n^2}{2}\,.
\end{align*}
Using the bound $x<\alpha$ from Claim~\ref{clm:smallx} completes the argument.
\end{claimproof}

The second step of the proof of Theorem~\ref{thm:RPn} is to bound the number of red edges which lie between different blue components, establishing \eqref{eq:rededgesbetweenblue}. 
We begin with the following claim.
\begin{claim}\label{clm:upperboundc}
The number, $c$, of blue components of $G$ is at most $\frac{4}{3}(k-\alpha)+1$.
\end{claim}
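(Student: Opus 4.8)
The plan is to bound the number of blue components by a counting argument that plays off the two extremal lemmas against the fact that blue is the densest colour. The key observation is that a connected graph on $m$ vertices with no $P_n$ has at most roughly $\frac{n}{2}\max\{n, m - \frac n4\}$ edges by Lemma~\ref{lem:connNoPn}, but combined with Claim~\ref{clm:noBlue5n/4} every blue component actually has at most $\frac{n^2}{2}$ edges. Meanwhile a component on $m \le n$ vertices has at most $\binom{m}{2} < \frac{mn}{2}$ edges. The crucial point is that small blue components are \emph{wasteful}: a blue component on far fewer than $n$ vertices contributes few blue edges relative to its vertex count, so having too many of them forces $e(B)$ to be too small to sustain $e(G) \le k\cdot e(B) = k\cdot e(B)$ against $e(G) = \binom N2$.

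Concretely, I would proceed as follows. Write $c$ for the number of blue components and let $v(B_i) = m_i$, so $\sum_i m_i = N = (k-\alpha)n$. By Claim~\ref{clm:noBlue5n/4} each $m_i \le \frac{5n}{4}$, and by Lemma~\ref{lem:connNoPn} together with that claim, $e(B_i) \le \frac n2\min\{m_i, n\} \le \frac{n}{2} m_i$ when $m_i \le n$, while $e(B_i) \le \frac{n^2}{2}$ always. A cleaner uniform bound: for every blue component, $e(B_i) \le \frac n2\bigl(m_i - \tfrac{(m_i - n)_+^{}\phantom{x}}{\ }\bigr)$ — more simply, $e(B_i) \le \frac{n}{2}m_i - \frac{n}{2}(m_i-n)$ whenever $m_i \le n$ shows each small component ``loses'' $\frac n2(n-m_i)$ edges compared with the naive $\frac n2 m_i$ estimate. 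Summing,
\[
e(B) \;\le\; \frac n2\sum_i m_i \;-\; \frac n2\sum_{i\,:\,m_i < n}(n - m_i) \;\le\; \frac n2 N \;-\; \frac n2\bigl((c - \ell)n - (\text{total vertices in small components})\bigr),
\]
where $\ell$ is the number of components with $m_i \ge n$. Since the large components contain at most $(1+x)n + (\ell-1)\cdot\frac{5n}{4}$-ish vertices and $x < \alpha$ by Claim~\ref{clm:smallx}, one extracts a genuine lower bound on how much is ``lost'' when $c$ is large, and then $\binom N2 \le k\, e(B)$ forces $c \le \frac43(k-\alpha) + 1$ after rearrangement.

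The main obstacle I expect is bookkeeping the interaction between the number $\ell$ of large components, the excess $x$, and the number $c - \ell$ of small components so that the worst case is correctly identified: to maximise $e(B)$ for fixed $c$ one wants as few small components as possible and those few as large as possible (close to $n$), but the total vertex budget $N$ and the cap $\frac{5n}{4}$ on each component constrain this. A convexity / exchange argument — pushing vertices out of tiny components into components of size exactly $n$, and capping large ones at $\frac{5n}{4}$ — should reduce to the configuration with $\ell$ components of size $\frac{5n}{4}$ (or one of size $(1+x)n$) and the rest of size $n$, with a single leftover small component; the inequality $c \le \frac43(k-\alpha)+1$ should then drop out with a little algebra, the factor $\frac43$ coming precisely from the ratio $\frac{5n/4}{\,n\,}$... no — from the fact that $\frac{n^2}{2}$ edges ``cost'' $\frac{5n}{4}$ vertices in the extremal component, i.e.\ $\frac{e(B_i)}{\text{edge-efficiency}}$, giving the density ceiling $\frac{n^2/2}{5n/4} = \frac{2n}{5}$ per vertex is wrong too; rather, a component of size $\frac{5n}{4}$ achieves $\frac{n^2}{2}$ edges so its ``edges per vertex'' is $\frac{2n}{5}$, and comparing total available edges $k\cdot\frac{n^2}{2}\cdot(\text{count})$ against $\binom N2$ pins down the count. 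I will carry the constants carefully; this is the step most likely to hide an off-by-one or a misplaced factor.
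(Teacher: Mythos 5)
Your plan is to bound $e(B)$ purely by a convexity/exchange argument and play it off against $e(G)\leq k\cdot e(B)$, but that line of attack cannot, by itself, bound $c$. The trouble is that tiny components are not expensive enough: a colouring with roughly $k-\alpha$ blue components of size $n$ together with a handful of blue singletons has $c$ as large as you like while keeping $e(B)$ within $O(n)$ of its maximum, so $k\cdot e(B)\geq\binom{N}{2}$ is not violated and your exchange argument finds no contradiction. (Your sketch even proposes to push vertices \emph{out} of tiny components \emph{into} the large ones; that only increases $e(B)$ and decreases $c$, which is the wrong direction if $c$ is what you need to control.) In other words, the claim is genuinely not an edge-count consequence of Lemmas~\ref{lem:EG} and~\ref{lem:connNoPn} plus Claims~\ref{clm:noBlue5n/4} and~\ref{clm:smallx}.

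The missing ingredient is the extremal choice of $G$: blue was chosen to have the \emph{most} edges among all $k$-colourings of $K_N$ with no monochromatic $P_n$. This is what rules out two small components. If $B_1$ and $B_2$ satisfy $v(B_1)+v(B_2)\le n-2$, then replacing the colouring inside $V(B_1)\cup V(B_2)$ by an all-blue clique cannot create a blue $P_n$ (the new blue component has fewer than $n$ vertices), yet strictly increases $e(B)$ — contradicting maximality. Hence any two blue components together span at least $n-1$ vertices, so at most one component has at most $\tfrac{3n}{4}$ vertices, and then $c-1<\frac{N}{3n/4}=\frac{4}{3}(k-\alpha)$ gives the claim directly; this is where the $\tfrac43$ really comes from, not from the $\tfrac{5n}{4}$ cap you were guessing at. The paper's density computation (two near-cliques of total size $bn$ with $\frac{n-1}{n}\le b\le\frac32$, plus Erd\H{o}s--Gallai on the rest) is only needed to dispose of the residual case where two components are both at most $\tfrac{3n}{4}$ but jointly at least $n-1$. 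Without invoking maximality somewhere, your proposal has no way to get started.
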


\begin{claimproof}
It suffices to show that all but at most one blue component contain more than $\frac{3n}{4}$ vertices.
Suppose for contradiction that $B_1$ and $B_2$ each have at most $\frac{3n}{4}$ vertices, and let $b$ satisfy $bn=v(B_1\cup B_2)$.
Note that, by the maximality assumption on blue, $B_1\cup B_2$ must contain at least $n-1$ vertices. If not, putting a blue clique on $V(B_1\cup B_2)$ would increase $e(B)$ without creating a blue $P_n$ in $G$.
We therefore have $\frac{n-1}{n} \leq b\leq \frac32$. 
$e(B_1\cup B_2)$ is maximal when both components are cliques and by convexity is maximised when $v(B_1)=\frac{3n}{4}$, $v(B_2)=\left(b-\frac{3}{4}\right)n$. 
Using Lemma~\ref{lem:EG} on the rest of the blue graph we have
\begin{align*}
e(B) 
&\leq \frac{9}{16}\frac{n^2}{2} + \left(b-\frac{3}{4}\right)^2\frac{n^2}{2}
 + (k-\alpha-b)\frac{n^2}{2}\\
&\leq \left(b^2-\frac{5}{2}b+k-\alpha+\frac{9}{8}\right)\frac{n^2}{2}\,.
\intertext{
Under the constraint $\frac{n-1}{n} \leq b\leq \frac32$, the quadratic function $b^2-\frac{5b}{2}$ is maximised at $b=\frac{n-1}{n}$, hence 
}
e(B)
&\leq \left(k-\alpha -\frac{3}{8} + \frac{1}{2n} + \frac{1}{n^2}\right)\frac{n^2}{2}\,.
\end{align*}
Since blue is the densest colour we have $e(G) \leq k\cdot e(B)$ which gives
\[
\binom{N}{2}=(k-\alpha)\left(k-\alpha-\frac{1}{n}\right)\frac{n^2}{2} 
\leq k\left(k-\alpha -\frac{3}{8} + \frac{1}{2n} + \frac{1}{n^2}\right)\frac{n^2}{2}
\]
hence
\[
\alpha^2 -\alpha k+ \frac{3k}{8} -\frac{3k-2\alpha}{2n}-\frac{k}{n^2}\leq 0\,.
\]
However this fails for $\alpha=\frac14-\frac{1}{2k}$ and $n\geq 64k$ giving the desired contradiction.
\end{claimproof}

Using the above bound on $c$, the next claim uses Lemma~\ref{lem:partiteNoPn} to bound the number of edges of $R'$. 

\begin{claim}\label{cl:edgespervertex}
Let $H$ be a $c$-partite connected graph on at most $(k-\alpha)n$ vertices which does not contain an $n$-vertex path. 
Then
\[
\frac{e(H)}{v(H)}\leq \frac{n}{2}\left(1-\frac{1}{4(k-\alpha)}\right)\,.
\]
\end{claim}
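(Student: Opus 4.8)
The plan is to apply Lemma~\ref{lem:partiteNoPn} directly to $H$, splitting into its three regimes for $v(H)$ and bounding $e(H)/v(H)$ in each, using the hypothesis $v(H)\le(k-\alpha)n$ together with the bound $c\le\frac43(k-\alpha)+1$ coming from Claim~\ref{clm:upperboundc}. (We may assume $c\ge2$, since otherwise $e(H)=0$.) In the `large' regime $v(H)>\frac{5n}{4}$ the lemma gives $e(H)\le\frac n2\bigl(v(H)-\frac n4\bigr)$, hence $\frac{e(H)}{v(H)}\le\frac n2\bigl(1-\frac{n}{4v(H)}\bigr)\le\frac n2\bigl(1-\frac1{4(k-\alpha)}\bigr)$, using $v(H)\le(k-\alpha)n$; this case needs nothing about $c$.

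For the two remaining regimes $v(H)\le\frac{5n}{4}$ I would first observe the pleasant fact that both collapse to the single bound $\frac{e(H)}{v(H)}\le\frac n2\sqrt{1-\frac1c}$. Indeed, in the `small' regime $v(H)\le n\sqrt{c/(c-1)}$ we have $e(H)\le\bigl(1-\frac1c\bigr)\frac{v(H)^2}{2}$, so $\frac{e(H)}{v(H)}\le\bigl(1-\frac1c\bigr)\frac{v(H)}{2}\le\bigl(1-\frac1c\bigr)\frac n2\sqrt{\tfrac{c}{c-1}}=\frac n2\sqrt{1-\frac1c}$; and in the `medium' regime $v(H)>n\sqrt{c/(c-1)}$ with $e(H)\le\frac{n^2}{2}$ we get $\frac{e(H)}{v(H)}<\frac{n^2}{2n\sqrt{c/(c-1)}}=\frac n2\sqrt{1-\frac1c}$. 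It then remains to verify $\sqrt{1-\frac1c}\le1-\frac1{4(k-\alpha)}$, i.e., after squaring, $\frac1c\ge\frac1{2(k-\alpha)}-\frac1{16(k-\alpha)^2}$. Substituting $c\le\frac43(k-\alpha)+1$ and writing $m=4(k-\alpha)$, this reduces to $\frac3{m+3}\ge\frac{2m-1}{m^2}$, equivalently $m^2-5m+3\ge0$, which holds with room to spare since $k\ge4$ and $\alpha<\frac14$ force $m>15$.

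All of this is elementary; the only genuine subtlety is that the `small' and `medium' cases really do require the upper bound on $c$ (for unbounded $c$ the claimed inequality would fail), so the main point of the argument is to recognise that Claim~\ref{clm:upperboundc} must be invoked at precisely this step, and then to push the resulting inequality $m^2-5m+3\ge0$ through. I do not anticipate any further obstacle.
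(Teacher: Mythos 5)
Your proof is correct and follows essentially the same route as the paper: the same three-regime split from Lemma~\ref{lem:partiteNoPn}, the same use of Claim~\ref{clm:upperboundc} to bound $c$, and the same reduction of the small and medium regimes to the single inequality $\frac n2\sqrt{1-\tfrac1c}\le\frac n2\bigl(1-\tfrac1{4(k-\alpha)}\bigr)$. The only (cosmetic) difference is that you verify this inequality by squaring directly, whereas the paper first linearises via $\sqrt{1-1/c}\le 1-\tfrac1{2c}$ and then compares $\tfrac1{2c}$ with $\tfrac1{4(k-\alpha)}$.
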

\begin{claimproof}
We use Lemma~\ref{lem:partiteNoPn} to break the proof into three cases depending on the size of $H$.
Firstly in the case where $v(H)\leq n\sqrt{\frac{c}{c-1}}$ we have $\frac{e(H)}{v(H)} \leq (1-\frac{1}{c})\frac{v(H)}{2}$.
Since $v(H)\leq n\sqrt{\frac{c}{c-1}}$ this is at most $\frac{n}{2}\sqrt{\frac{c-1}{c}}\leq\frac{n}{2} \left(1-\frac{1}{2c}\right)$.
By Claim~\ref{clm:upperboundc} we know that $c \leq \frac{4}{3}(k-\alpha)+1$.
This gives a bound of 
\[
\frac{e(H)}{v(H)} \leq \frac{n}{2}\left(1- \frac{3}{8(k-\alpha)+6}\right)\leq \frac{n}{2}\left(1-\frac{1}{4(k-\alpha)}\right) \,.
\]

Next suppose $n\sqrt{\frac{c}{c-1}} < v(H) \leq \frac{5n}{4}$.
Then, by Lemma~\ref{lem:partiteNoPn}, we have $\frac{e(H)}{v(H)} \leq \frac{n}{2}\sqrt{\frac{c-1}{c}}$.
As shown in the previous case $\frac{n}{2}\sqrt{\frac{c-1}{c}}$ is at most $ \frac{n}{2}\left(1-\frac{1}{4(k-\alpha)}\right)$.

Finally suppose $v(H) > \frac{5n}{4}$.
Then $\frac{e(H)}{v(H)} \leq \frac{n}{2}\left(1-\frac{n}{4v(H)}\right)$.
This is maximised when $v(H)$ is as large as possible giving
$\frac{e(H)}{v(H)} \leq \frac{n}{2}\left(1-\frac{1}{4(k-\alpha)}\right)$.
\end{claimproof}

We can now deduce \eqref{eq:rededgesbetweenblue} from Claim~\ref{cl:edgespervertex}. 
There will be a connected component of $R'$ with at least as high a density as the overall density of $R'$.
Therefore if $R'$ had more than $\left(k-\alpha -\frac{1}{4}\right)\frac{n^2}{2}$ edges there would be a connected component $H$ satisfying
\[
\frac{e(H)}{v(H)} > \frac{1}{N}\left(k-\alpha -\frac{1}{4}\right)\frac{n^2}{2} = \frac{n}{2}\left(1-\frac{1}{4(k-\alpha)}\right) \,.
\]
This contradicts Claim~\ref{cl:edgespervertex}, completing the proof.
\end{proof}

\section{Even Cycles}\label{sec:Reg}

The proof of Theorem~\ref{thm:connM} closely resembles the arguments of the previous section. 
We make three changes, the first two of which are only minor adjustments. 
We must work with the value $\alpha=\frac14$ instead of the value $\frac14-\frac{1}{2k}$, and we must permit the host graph $G$ to have as few as $(1-\delta)\binom{N}{2}$ edges for some small $\delta>0$ which we choose. 
The more significant change is that we apply Lemma~\ref{lem:partiteNoMatching} instead of Lemma~\ref{lem:partiteNoPn} to bound the number of edges between blue components. 

The reason we are able to improve upon the result of Theorem~\ref{thm:RPn} is that when looking only for a connected matching (rather than a path) we can better deal with large components of the graph $R'$ consisting of red edges between blue components. 
In particular, the tight case of Claim~\ref{cl:edgespervertex} is when $v(H)>\frac{5n}{4}$ where we can do no better than assume $R'$ consists of one large connected component. 
The improvement in this case is given by Lemma~\ref{lem:partiteNoMatching}, where we get a better bound on $e(H)$ when $H$ is a component of $R'$ with at least $\frac{31n}{16}$ vertices. 

\begin{proof}[Proof of Lemma~\ref{lem:partiteNoMatching}]
First note that the three bounds for the range $v(H) < \frac{31n}{16}$ follow directly from Lemma~\ref{lem:partiteNoPn} since, for even $n$, a copy of $P_n$ contains a matching of $\frac{n}{2}$ edges.
We therefore assume $H$ is a $c$-partite, connected graph on at least $\frac{31n}{16}$ vertices, in which the sizes of any two parts sum to at least $n$ and which contains no matching of $\frac{n}{2}$ edges.
We will show that $e(H) \leq \frac{n}{2}(v(H)-\frac{7n}{16})$.

Let $A=\{v\in H: d(v)\geq n\}$ and let $M$ denote a maximal matching in $H':=H\backslash A$. 
We may assume $v(M)<n$ as $H$ does not contain any matching with $n/2$ edges.
We will bound $e(H)$ by first bounding $e(H')$ and then bounding the number of edges incident to $A$.  
First note that 
\begin{equation}\label{greedy}
\abs{A}\leq\frac{n}{2}-\frac{v(M)}{2}\,,
\end{equation}
otherwise we could greedily extend $M$ to a matching of size $\frac{n}{2}$ in $H$ contradicting the assumption of the lemma. For $v\in H'$ let $d^{\ast}(v)$ denote the number of neighbours of $v$ in $H'\backslash M$. 
Now let $\{u,v\}$ be an edge of $M$. 
Note that either $d^{\ast}(u)\leq1$ or $d^{\ast}(v)\leq 1$ else we could replace the edge $\{u,v\}$ with a pair of edges $\{u,x\}$, $\{v,y\}$ to get a larger matching in $H'$. 
Let us denote the edges of $M$ by $\{u_i, v_i\}$ for $i=1,\ldots,\frac{v(M)}{2}$ and assume without loss of generality that  $d^{\ast}(u_i)\leq1$ for all $i$. 
Since each edge of $H'$ is incident to an edge of $M$ by maximality it follows that
\begin{align}\label{e(H')}
\nonumber e(H')&\leq \sum_{i=1}^{\frac{1}{2}v(M)}\left(d(v_i)+d^{\ast}(u_i)\right)+\binom{\frac{1}{2}v(M)}{2}\\
\nonumber&\leq \frac{v(M)}{2}\left(n+\frac{v(M)}{4}\right)\\
&\leq \frac{5}{8}n^2-\frac{3}{2}\abs{A}n+\frac{\abs{A}^2}{2}\,,
\end{align}
where for the second inequality we used that $d(v)<n$ for all $v\in H'$ by the definition of $A$, $d^{\ast}(u_i)\leq 1$ for all $i$ by assumption. For the last inequality we used \eqref{greedy}. 
We now turn our attention to bounding the number of edges incident to $A$.  
Recall that $H$ is $c$-partite and let $t$ denote the size of its smallest part. 
First let us suppose that $t\leq \abs{A}$. Since we assume the sum of any two parts of $H$ is at least $n$, it follows that the second smallest part of $H$ has size at least $n-t$ (note that $t\leq \frac{n}{2}$ by \eqref{greedy}). 
It follows that at most $t$ vertices of $A$ have degree $v(H)-t$ and the rest have degree at most $v(H)-n+t$ so that
\begin{equation*}
\sum_{v\in A}d(v)\leq t(v(H)-t)+ (\abs{A}-t)(v(H)-n+t)\,.
\end{equation*}
Considering the right hand side as a quadratic function in $t$ we see that it is maximised when $t=\frac{n+\abs{A}}{4}$ and so 
\begin{equation}\label{topt}
\sum_{v\in A}d(v)\leq \frac{\abs{A}^2}{8}+\left(v(H)-\frac{3}{4}n\right)\abs{A}+\frac{n^2}{8}\,.
\end{equation}
Since $e(H)\leq e(H')+\sum_{v\in A}d(v)$ it follows by \eqref{e(H')} and \eqref{topt} that
\begin{equation}\label{e(H)}
e(H)\leq \frac{5}{8}\abs{A}^2+\left(v(H)-\frac{9}{4}n\right)\abs{A}+\frac{3}{4}n^2\,.
\end{equation}
We consider the right hand side as a quadratic function in $\abs{A}$ and optimise under the constraint $0\leq\abs{A}\leq\frac{n}{2}$. The maximum must occur at either $\abs{A}=0$ or $\abs{A}=\frac{n}{2}$ and it is simple to check that the latter is the maximiser under the assumption that $v(H)\geq \frac{31}{16}n$. It follows that $e(H)\leq\frac{n}{2}v(H)-\frac{7}{32}n^2$ as claimed. It remains to consider the case where $t\geq \abs{A}$. Recall that the maximum degree of $H$ is at most $v(H)-t$ and so 
\begin{equation*}
\sum_{v\in A}d(v)\leq \abs{A}(v(H)-t)\leq \abs{A}(v(H)-\abs{A})\leq\frac{n}{2}v(H)-\frac{n^2}{4}\,,
\end{equation*}
where for the last inequality we again use the bound $\abs{A}\leq \frac{n}{2}$. The result follows.
\end{proof}

\begin{proof}[Proof of Theorem~\ref{thm:connM}]
Let $\alpha=\frac14$, $0<\delta<\frac{1}{64k^2}$ and let $G$ be a $k$-coloured graph on $N=(k-\alpha)n$ vertices with at least $(1-\delta)\binom{N}{2}$ edges. 
We proceed by contradiction, supposing that $G$ contains no monochromatic connected matching of $\frac{n}{2}$ edges. 
Over all such $G$ consider the one in which blue has the most edges. 

Let $B_1,\dotsc, B_c$ be the blue connected components of $G$, 
suppose that red has the most edges between blue connected components and let $R'$ denote the $c$-partite graph of red edges which lie between blue components.
The method is the same as the previous section. We establish the following two bounds.
\begin{equation}\label{eq:edgesinblue2}
\sum_{i=1}^c \binom{v(B_i)}{2} \leq \left(k-2\alpha+5\alpha^2\right)\frac{n^2}{2} = \left(k-\frac{3}{16}\right)\frac{n^2}{2}\,,
\end{equation}

\begin{equation}\label{eq:rededgesbetweenblue2}
e(R') \leq \left(k-\alpha -\frac{7}{16}\right)\frac{n^2}{2} =\left(k-\frac{11}{16}\right)\frac{n^2}{2}\,.
\end{equation}
We then deduce
\[
e(G) \leq (k-1)e(R')+\sum_{i=1}^c \binom{v(B_i)}{2}
\leq \bigg(k^2-\frac{11}{16}k+\frac{1}{2}\bigg)\frac{n^2}{2}\,.
\]
Since $e(G)\geq(1-\delta)\binom{N}{2}=(1-\delta)(k-\frac{1}{4})(k-\frac{1}{4}-\frac{1}{n})\frac{n^2}{2}$ it is easy to verify that with $\delta<\frac{1}{64k^2}$ and $n\geq 32k$ we reach the desired contradiction.

It remains to prove the inequalities~\eqref{eq:edgesinblue2} and~\eqref{eq:rededgesbetweenblue2}. 
We start by showing Claims~\ref{clm:noBlue5n/4} and~\ref{clm:smallx} have direct analogues here.

\begin{claim}\label{clm:noBlue5n/42}
There is no blue component on more than $\frac{5n}{4}$ vertices.
\end{claim}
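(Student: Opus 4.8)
The plan is to transcribe the proof of Claim~\ref{clm:noBlue5n/4} essentially word for word, with the single structural change that the path lemmas are applied via the observation that, for even $n$, a copy of $P_n$ contains a matching of $\frac n2$ edges. The first thing I would record is that every blue component is free of a matching of $\frac n2$ edges: a blue component is connected, so if one contained such a matching it would itself be a monochromatic connected matching of $\frac n2$ edges, contradicting our standing assumption. Consequently each blue component contains no $P_n$, and since a path is connected, the disjoint union of blue components other than a fixed one also contains no $P_n$. This is exactly the hypothesis needed to run Lemmas~\ref{lem:connNoPn} and~\ref{lem:EG}.

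Now suppose for contradiction that there is a blue component $B_1$ on $\beta n$ vertices with $\beta > \frac54$. Lemma~\ref{lem:connNoPn} applied to $B_1$ gives $e(B_1)\leq\frac n2\max\{n,\beta n-\frac n4\}=(\beta-\frac14)\frac{n^2}{2}$, where we used $\beta>\frac54$ to see the second term dominates. The rest of the blue graph, $B\setminus B_1$, has at most $(k-\alpha-\beta)n$ vertices and contains no $P_n$, so Lemma~\ref{lem:EG} yields $e(B\setminus B_1)\leq(k-\alpha-\beta)\frac{n^2}{2}$, and hence $e(B)\leq(k-\alpha-\frac14)\frac{n^2}{2}$. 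Since blue is the densest colour, $e(G)\leq k\cdot e(B)\leq k\bigl(k-\alpha-\tfrac14\bigr)\frac{n^2}{2}$, while the hypothesis gives $e(G)\geq(1-\delta)\binom N2$. With $\alpha=\frac14$ and $N=(k-\frac14)n$ this says
\[
(1-\delta)\Bigl(k-\tfrac14\Bigr)\Bigl(k-\tfrac14-\tfrac1n\Bigr)\leq k\Bigl(k-\tfrac12\Bigr),
\]
which I claim fails for $\delta<\frac1{64k^2}$ and $n\geq32k$, the desired contradiction.

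There is no genuine obstacle here; the only point requiring attention is the bookkeeping in the last inequality. The bare identity $(k-\frac14)^2-k(k-\frac12)=\frac1{16}$ shows the left side exceeds the right by $\frac1{16}\cdot\frac{n^2}{2}$ before the error terms, and one checks that the $\frac1n$-correction (at most $\frac{k-1/4}{32k}<\frac1{32}$) and the $\delta$-correction (at most $\frac1{64k^2}\cdot k^2=\frac1{64}$) together fall short of this $\frac1{16}$ gap, so the inequality is violated for all $k\geq4$. This is the same routine verification already used for Claim~\ref{clm:noBlue5n/4}, and I would simply state that it holds.
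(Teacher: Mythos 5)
Your proof is correct and follows the paper's argument essentially verbatim, including the reduction $e(B)\leq(k-\alpha-\frac14)\frac{n^2}{2}$ via Lemmas~\ref{lem:connNoPn} and~\ref{lem:EG} and the pigeonhole bound $e(G)\leq k\cdot e(B)$. The one addition you make — explicitly noting that each blue component is $P_n$-free because a $P_n$ (for even $n$) would give a connected matching of $\frac n2$ edges — is the right justification and is indeed what the paper leaves implicit in this claim, and your arithmetic verifying $(1-\delta)(k-\tfrac14)(k-\tfrac14-\tfrac1n)>k(k-\tfrac12)$ is sound.
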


\begin{claimproof}
For contradiction, suppose there is a blue component $B_1$ on $\beta n$ vertices with $\beta> \frac{5n}{4}$. 
In this case, by Lemma~\ref{lem:connNoPn} we have $e(B_1)\leq(\beta-\frac{1}{4})\frac{n^2}{2}$.
Using Lemma~\ref{lem:EG} on the rest of the blue graph, $B$, we obtain
\[
e(B) \leq \left(\beta-\frac{1}{4}\right)\frac{n^2}{2}+\left(k-\alpha-\beta\right)\frac{n^2}{2} = \bigg(k-\alpha - \frac14 \bigg)\frac{n^2}{2}\,.
\]
Since blue is the densest colour we have $e(G)\leq k\cdot e(B)$ and hence
\begin{align*}
(1-\delta)\binom{N}{2}=(1-\delta)(k-\alpha)\left(k-\alpha-\frac{1}{n}\right)\frac{n^2}{2} &\leq k \left(k-\alpha-\frac{1}{4}\right)\frac{n^2}{2}\\
(1-\delta)\alpha^2-(1-2\delta)k\alpha+\frac{k}{4}-(1-\delta)\frac{k-\alpha}{n} -\delta k^2 &\leq 0\,.
\end{align*}
This fails with $\alpha = \frac14$, $n \geq 32k$ and $\delta<\frac{1}{64k^2}$.
\end{claimproof}

Using the above claim we get a tighter bound on the size of blue components. 
Let $x$ be the excess size of blue components
\[
xn = \sum_{i=1}^c\max\{v(B_i)-n,0\}
\]
as before.

\begin{claim}\label{clm:smallx2}
We have $x< \alpha$.
\end{claim}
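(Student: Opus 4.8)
The plan is to mirror the proof of Claim~\ref{clm:smallx} essentially line by line, making the two substitutions used throughout this section: take $\alpha=\frac14$, and replace the identity $e(G)=\binom{N}{2}$ by the inequality $e(G)\geq(1-\delta)\binom{N}{2}$. Note first that $G$ contains no monochromatic copy of $P_n$: for even $n$ a copy of $P_n$ is a connected graph with a matching of $\frac{n}{2}$ edges, so a monochromatic $P_n$ would be a monochromatic connected matching of $\frac{n}{2}$ edges.

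Now I would let $B_1,\dots,B_\ell$ be the blue components on more than $n$ vertices, so that $\sum_{i=1}^\ell v(B_i)=(\ell+x)n$ by the definition of the excess. Each $B_i$ is connected and $P_n$-free, so by Lemma~\ref{lem:connNoPn} together with Claim~\ref{clm:noBlue5n/42} (which gives $v(B_i)\leq\frac{5n}{4}$, hence $v(B_i)-\frac{n}{4}\leq n$) we get $e(B_i)\leq\frac{n^2}{2}$ for each $i\leq\ell$. The remaining blue components span a total of $(k-\alpha-\ell-x)n$ vertices and contain no $P_n$, so Lemma~\ref{lem:EG} bounds the number of blue edges among them by $\frac{n^2}{2}(k-\alpha-\ell-x)$. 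Summing the contributions of all blue components gives $e(B)\leq(k-\alpha-x)\frac{n^2}{2}$. Since blue is the densest colour, $e(G)\leq k\cdot e(B)$, so combining with $e(G)\geq(1-\delta)\binom{N}{2}$ yields
\[
(1-\delta)(k-\alpha)\Bigl(k-\alpha-\tfrac1n\Bigr)\frac{n^2}{2}\leq k(k-\alpha-x)\frac{n^2}{2}\,,
\]
which rearranges to the explicit bound $x\leq\frac{k-\alpha}{k}\bigl(\alpha+\delta(k-\alpha)+\frac1n\bigr)$.

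The only remaining step is a short computation verifying that, with $\alpha=\frac14$, $\delta<\frac1{64k^2}$ and $n\geq 32k$, this right-hand side is strictly less than $\alpha$; I do not expect it to be an obstacle, since the inequality reduces to $\delta(k-\alpha)+\frac1n<\frac{\alpha^2}{k-\alpha}$, and the chosen bounds on $\delta$ and $n$ are calibrated precisely so that this holds. If extra room were wanted one could instead keep the $-\frac{\alpha^2}{k}$ term explicit, exactly as in the bound $x\leq\alpha-\frac{\alpha^2}{k}+\frac{k-\alpha}{kn}$ obtained in the proof of Claim~\ref{clm:smallx}.
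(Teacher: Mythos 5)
Your proof is correct and follows the paper's own argument essentially line by line: same decomposition of the blue graph into components above and below $n$ vertices, same application of Lemma~\ref{lem:connNoPn} (via Claim~\ref{clm:noBlue5n/42}) and Lemma~\ref{lem:EG}, and the same rearrangement after replacing $e(G)=\binom{N}{2}$ by $e(G)\geq(1-\delta)\binom{N}{2}$. The explicit remark that a monochromatic $P_n$ (for even $n$) would itself be a connected matching of $\frac{n}{2}$ edges, so that the path-free lemmas still apply, is a nice clarification the paper leaves implicit, and your closing numerical check (reducing to $\delta(k-\alpha)+\frac1n<\frac{\alpha^2}{k-\alpha}$) does hold for $\alpha=\frac14$, $\delta<\frac1{64k^2}$, $n\geq 32k$.
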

\begin{claimproof}
Let $B_1,\dotsc,B_\ell$ be the blue components with more than $n$ vertices. 
By Lemma~\ref{lem:connNoPn} and Claim~\ref{clm:noBlue5n/42} we have that there are at most $\frac{n^2}{2}$ edges in each of $B_1,\dotsc,B_\ell$.
Using Lemma~\ref{lem:EG} on the rest of the blue graph we have
\[
e(B) \leq \ell\frac{n^2}{2} + (k-\alpha-\ell-x)\frac{n^2}{2} =(k-\alpha - x)\frac{n^2}{2}\,.
\]
Since blue is the densest colour we have $e(G)\leq k\cdot e(B)$, and so
\[
(1-\delta)\binom{N}{2}=(1-\delta)(k-\alpha)\left(k-\alpha-\frac{1}{n}\right)\frac{n^2}{2} \leq k(k-\alpha-x)\frac{n^2}{2}
\]
therefore
\[
x \leq (1-2\delta)\alpha -(1-\delta)\frac{\alpha^2}{k} +\delta k + (1-\delta)\frac{k-\alpha}{kn}\,,
\]
and in particular $x<\alpha$ for $\alpha=\frac14$, $n\geq 32k$ and $\delta<\frac{1}{64k^2}$.
\end{claimproof}
Inequality~\eqref{eq:edgesinblue2} follows from Claim~\ref{clm:smallx2} in the exact same way as inequality~\eqref{eq:edgesinblue} follows from Claim~\ref{clm:smallx}.

We require the same bound as before on the number of blue components, now with $\alpha = \frac14$.

\begin{claim}\label{clm:upperboundc2}
The number, $c$, of blue components of $G$ is at most $\frac{4}{3}(k-\alpha)+1$.
\end{claim}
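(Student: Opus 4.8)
The claim to prove is Claim~\ref{clm:upperboundc2}, which is the exact analogue of Claim~\ref{clm:upperboundc} but now with $\alpha=\frac14$ and in the relaxed setting where $G$ has at least $(1-\delta)\binom{N}{2}$ edges rather than being complete. My plan is to follow the proof of Claim~\ref{clm:upperboundc} essentially verbatim, carefully tracking the two modifications: the value $\alpha=\frac14$ and the $(1-\delta)$ factor on $\binom{N}{2}$, in exactly the same way these modifications were threaded through Claims~\ref{clm:noBlue5n/42} and~\ref{clm:smallx2}.

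\begin{claimproof}
It suffices to show that all but at most one blue component contains more than $\frac{3n}{4}$ vertices; since the components partition at most $N=(k-\alpha)n$ vertices, this gives $c-1\leq\frac{N}{3n/4}=\frac43(k-\alpha)$, i.e.\ $c\leq\frac43(k-\alpha)+1$.
Suppose for contradiction that two components $B_1,B_2$ each have at most $\frac{3n}{4}$ vertices, and set $bn=v(B_1\cup B_2)$.
By the maximality of blue, $B_1\cup B_2$ must span at least $n-1$ vertices: otherwise replacing the edges inside $V(B_1\cup B_2)$ by a blue clique would strictly increase $e(B)$ without creating a monochromatic connected matching of $\frac n2$ edges (a clique on fewer than $n$ vertices has no matching of size $\frac n2$), contradicting maximality. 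Hence $\frac{n-1}{n}\leq b\leq\frac32$.
As both components contribute at most $\binom{v(B_i)}{2}$ edges, $e(B_1\cup B_2)$ is maximised when both are cliques, and by convexity when $v(B_1)=\frac{3n}{4}$, $v(B_2)=(b-\frac34)n$. Applying Lemma~\ref{lem:EG} to the remaining blue vertices gives
\begin{align*}
e(B)&\leq\frac{9}{16}\frac{n^2}{2}+\left(b-\frac34\right)^2\frac{n^2}{2}+(k-\alpha-b)\frac{n^2}{2}
\leq\left(b^2-\frac52 b+k-\alpha+\frac98\right)\frac{n^2}{2}\,.
\end{align*}
On $\frac{n-1}{n}\leq b\leq\frac32$ the quadratic $b^2-\frac52 b$ is decreasing, so it is maximised at $b=\frac{n-1}{n}$, yielding
\[
e(B)\leq\left(k-\alpha-\frac38+\frac{1}{2n}+\frac{1}{n^2}\right)\frac{n^2}{2}\,.
\]
Since blue is the densest colour, $e(G)\leq k\cdot e(B)$, so from $e(G)\geq(1-\delta)\binom{N}{2}$ with $N=(k-\alpha)n$ we obtain
\[
(1-\delta)(k-\alpha)\left(k-\alpha-\frac1n\right)\frac{n^2}{2}\leq k\left(k-\alpha-\frac38+\frac{1}{2n}+\frac{1}{n^2}\right)\frac{n^2}{2}\,,
\]
which rearranges to
\[
(1-\delta)\alpha^2-(1-2\delta)\alpha k+\frac{3k}{8}-(1-\delta)\frac{3k-2\alpha}{2n}-\frac{k}{n^2}-\delta k^2\leq 0\,.
\]
A direct check shows this fails for $\alpha=\frac14$, $n\geq 32k$ and $\delta<\frac{1}{64k^2}$, giving the desired contradiction.
\end{claimproof}

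The only step requiring genuine care, as opposed to transcription, is confirming that the final inequality still fails after the $\alpha=\frac14$ substitution and the weakening by $(1-\delta)$ and $n\geq 32k$ (versus $n\geq64k$ in Section~\ref{sec:Pnproof}): the $\delta$-terms contribute an error of order $\delta k^2<\frac{1}{64}$ and the $\frac1n$-terms an error of order $\frac1k$, both comfortably dominated by the constant gap coming from $\frac{3k}{8}-\frac{k}{8}=\frac{k}{4}$ after one plugs in $\alpha=\frac14$. I expect no real obstacle here — the robustness margin built into the choice $\delta<\frac{1}{64k^2}$ and $n\geq 32k$ was precisely calibrated (in Claims~\ref{clm:noBlue5n/42} and~\ref{clm:smallx2}) to absorb exactly these perturbations.
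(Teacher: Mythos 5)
Your proof is correct and follows exactly the approach the paper intends: the paper's own proof of Claim~\ref{clm:upperboundc2} simply defers to Claim~\ref{clm:upperboundc} after replacing $\binom{N}{2}$ with $(1-\delta)\binom{N}{2}$, and you have carried out that substitution in full. One negligible algebraic slip: the rearranged inequality should have $-(1-\delta)\frac{k-\alpha}{n}-\frac{k}{2n}$ rather than $-(1-\delta)\frac{3k-2\alpha}{2n}$ (they differ by $\frac{\delta k}{2n}$, far below the margin in the final numerical check), so the conclusion is unaffected.
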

\begin{claimproof}
The proof follows that of Claim~\ref{clm:upperboundc} replacing $\binom{N}{2}$ with $(1-\delta)\binom{N}{2}$.
With $\alpha=\frac14$, $n\geq 32k$ and $\delta <\frac{1}{64k^2}$ the required contradiction holds.
\end{claimproof}

The final claim is the improved version of Claim~\ref{cl:edgespervertex} which makes use of Lemma~\ref{lem:partiteNoMatching} and the above claim bounding $c$.

\begin{claim}\label{cl:edgespervertex2}
Let $H$ be a $c$-partite connected graph which does not contain a matching of $\frac{n}{2}$ edges. 
Suppose further that there is a $c$-partition of $H$ such that the sum of the sizes of any two parts is at least $n$.
Then 
\[
\frac{e(H)}{v(H)}\leq \frac{n}{2}\left(1-\frac{7}{16(k-\alpha)}\right)\,.
\]
\end{claim}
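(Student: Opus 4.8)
The plan is to mirror the proof of Claim~\ref{cl:edgespervertex} line for line, splitting according to the four ranges of $v(H)$ appearing in Lemma~\ref{lem:partiteNoMatching} and verifying in each that
\[
\frac{e(H)}{v(H)}\leq \frac{n}{2}\left(1-\frac{7}{16(k-\alpha)}\right).
\]
As in Claim~\ref{cl:edgespervertex} I will also use $v(H)\leq (k-\alpha)n$ (an explicit hypothesis there; in the intended application $H$ is a connected component of the graph $R'$ sitting inside $G$, so it holds) --- this will be needed only in the largest range. We may assume $c\geq2$, since otherwise $H$ has no edges. Recall $\alpha=\frac14$ and $k\geq4$, so $k-\alpha\geq\frac{15}{4}$ and $\frac{7}{16(k-\alpha)}\leq\frac{7}{60}$.

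The genuinely new ingredient --- and the reason the bound $c\leq\frac43(k-\alpha)+1$ from Claim~\ref{clm:upperboundc2} no longer does the job by itself --- is the hypothesis that $H$ has a $c$-partition in which every two parts have sizes summing to at least $n$. This forces at most one part to have fewer than $\frac n2$ vertices, and hence $v(H)\geq(c-1)\frac n2$. In the first range, $v(H)\leq n\sqrt{\frac{c}{c-1}}$, combining this with $v(H)\geq(c-1)\frac n2$ gives $(c-1)^3\leq4c$ and so $c\leq3$; in the second range, $v(H)\leq\frac{5n}{4}$, it gives $c-1\leq\frac52$, so again $c\leq3$. Therefore in both of these ranges Lemma~\ref{lem:partiteNoMatching} yields
\[
\frac{e(H)}{v(H)}\leq\frac n2\sqrt{\frac{c-1}{c}}\leq\frac n2\sqrt{\tfrac23}
\]
(in the first range from $e(H)\leq(1-\tfrac1c)\tfrac{v(H)^2}{2}$ together with $v(H)\leq n\sqrt{\tfrac c{c-1}}$; in the second from $e(H)\leq\tfrac{n^2}{2}$ together with $v(H)>n\sqrt{\tfrac c{c-1}}$). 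Since $\sqrt{\tfrac23}\leq1-\tfrac{7}{60}\leq1-\tfrac{7}{16(k-\alpha)}$, both ranges are settled.

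For the third range, $\frac{5n}{4}<v(H)<\frac{31n}{16}$, Lemma~\ref{lem:partiteNoMatching} gives $\frac{e(H)}{v(H)}\leq\frac n2\bigl(1-\frac{n}{4v(H)}\bigr)<\frac n2\bigl(1-\frac{4}{31}\bigr)$, and $\frac{4}{31}\geq\frac{7}{16(k-\alpha)}$ because $64(k-\alpha)\geq240>217=7\cdot31$. For the fourth range, $v(H)\geq\frac{31n}{16}$, Lemma~\ref{lem:partiteNoMatching} gives $\frac{e(H)}{v(H)}\leq\frac n2\bigl(1-\frac{7n}{16v(H)}\bigr)$, which increases with $v(H)$; taking $v(H)=(k-\alpha)n$ makes this exactly $\frac n2\bigl(1-\frac{7}{16(k-\alpha)}\bigr)$. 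That exhausts all cases.

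The one real obstacle is recognising the mechanism above: in the two small ranges the crude estimate $\frac n2\sqrt{\frac{c-1}{c}}$ is useless if $c$ can be of order $k$, so one must notice that the part-balance hypothesis confines those ranges to $c\leq3$. Everything else is a routine check of the four resulting numerical inequalities, all of which are tightest at $k=4$, $\alpha=\frac14$, the fourth being the governing (essentially tight) one.
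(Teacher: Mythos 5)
Your proof is correct, and in the two ``small'' ranges it quietly fixes a gap in the paper's own argument. The paper asserts that when $v(H)\le\frac{5n}{4}$ ``the argument is identical to that of Claim~\ref{cl:edgespervertex},'' i.e.\ one should bound $\frac{n}{2}\sqrt{\frac{c-1}{c}}\le\frac{n}{2}(1-\frac{1}{2c})$ and then plug in the bound $c\le\frac{4}{3}(k-\alpha)+1$ from Claim~\ref{clm:upperboundc2}. But that chain gives $\frac{e(H)}{v(H)}\le\frac{n}{2}\bigl(1-\frac{3}{8(k-\alpha)+6}\bigr)$, and one checks that $\frac{3}{8(k-\alpha)+6}<\frac{7}{16(k-\alpha)}$ for every $k$, so this does \emph{not} establish the claim; the paper's stated intermediate value $\frac{3}{4(k-\alpha)+6}$ does not follow from the ``identical'' argument and appears to be an error. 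Your route avoids this entirely: the extra hypothesis of Lemma~\ref{lem:partiteNoMatching} (any two parts sum to at least~$n$) forces at most one part below $\frac{n}{2}$, hence $v(H)\ge(c-1)\frac{n}{2}$, which in both small ranges pins $c\le3$ and gives the much stronger $\frac{e(H)}{v(H)}\le\frac{n}{2}\sqrt{2/3}<\frac{n}{2}\bigl(1-\frac{7}{16(k-\alpha)}\bigr)$. Your handling of the two large ranges matches the paper's, and your observation that $v(H)\le(k-\alpha)n$ (implicit in the paper's final case and explicit in Claim~\ref{cl:edgespervertex}) is needed is correct. In short: same overall structure, but you supply the key observation that makes the small-$v(H)$ cases actually work, which the paper elides.
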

\begin{claimproof}
We prove this using Lemma~\ref{lem:partiteNoMatching} in the same way that we proved Claim~\ref{cl:edgespervertex} using Lemma~\ref{lem:partiteNoPn}; breaking into cases depending on the size of $H$.

Firstly if $v(H)\leq n\sqrt{\frac{c}{c-1}}$, or if $n\sqrt{\frac{c}{c-1}} < v(H) \leq \frac{5n}{4}$, the argument is identical to that of Claim~\ref{cl:edgespervertex} giving in both cases $\frac{e(H)}{v(H)} \leq \frac{n}{2}\left(1-\frac{3}{4(k-\alpha)+6} \right)\leq \frac{n}{2}\left(1-\frac{7}{16(k-\alpha)}\right)$.

If $\frac{5n}{4} < v(H) < \frac{31n}{16}$ then 
\[
\frac{e(H)}{v(H)}\leq \frac{n}{2}\left(1-\frac{n}{4v(H)}\right)\leq \frac{n}{2}\left(1-\frac{4}{31}\right)\leq \frac{n}{2}\left(1-\frac{7}{16(k-\alpha)}\right)\,.
\]

Finally if $\frac{31n}{16} \leq v(H)$ we have $\frac{e(H)}{v(H)}\leq \frac{n}{2}\left( 1- \frac{7n}{16v(H)}\right)$.
This is maximised when $v(H)$ is as large as possible and so we have
\[
\frac{e(H)}{v(H)} \leq \frac{n}{2}\left(1-\frac{7}{16(k-\alpha)}\right)\,.
\]
\end{claimproof}

We can now deduce inequality~\eqref{eq:rededgesbetweenblue2} from Claim~\ref{cl:edgespervertex2}.
Suppose for contradiction that $e(R') >\left(k-\alpha -\frac{7}{16}\right)\frac{n^2}{2}$.
Then, by the pigeonhole principle, there is a connected component of $H$ with 
\[
\frac{e(H)}{v(H)} > \frac{1}{N}\left(k-\alpha -\frac{7}{16}\right)\frac{n^2}{2} =\frac{n}{2}\left(1-\frac{7}{16(k-\alpha)}\right)\,.
\]
This contradicts Claim~\ref{cl:edgespervertex2}, proving \eqref{eq:rededgesbetweenblue2} and completing the proof of Theorem~\ref{thm:connM}.
\end{proof}

\bibliographystyle{plain}
\bibliography{evencycles}

\end{document}